\documentclass[12pt]{extarticle}
\usepackage{epsf}
\usepackage{amsbsy,amsmath}
\usepackage{mathtools}
\usepackage{mathrsfs}
\usepackage{amsfonts}
\usepackage{amsmath}
\usepackage{amssymb}
\usepackage{enumitem}

\usepackage{mathabx}

\usepackage{eucal}
\usepackage{enumitem}
\usepackage{graphics,mathrsfs}
\usepackage{amsthm}
\usepackage{secdot}
\usepackage{xcolor}
\newtheorem{theorem}{Theorem}[section]
\newtheorem{lemma}[theorem]{Lemma}

\theoremstyle{definition}
\newtheorem{definition}[theorem]{Definition}

\theoremstyle{remark}
\theoremstyle{definition}
\newtheorem{remark}[theorem]{Remark}
\numberwithin{equation}{section}

\numberwithin{equation}{section}

\newsavebox{\savepar}

\begin{document}	
	
	
	
	
	\title{A topological approach to an elliptic problem}
	\author{\bf Debajyoti Choudhuri$^{a}$ \footnote{Corresponding author: dc.iit12@gmail.com (Debajyoti Choudhuri)}, Vikas Jaiswal$^{b}$ \\
		\small{$^{a}$ Department of Mathematics, Indian Institute of Technology Bhubaneswar,}\\
		\small{Odisha, 752050, India} \\
	\small{$^{b}$Department of Mathematics, National Institute of Technology Rourkela,}\\
	\small{Odisha, 769008, India}	\\}
	\maketitle
	\begin{abstract}
		\noindent 	In this paper, we study an elliptic problem involving a $p$-Laplacian operator and a potential well  which is driven by a critical and singular nonlinearity. Under the limiting case of a parameter blowing up to $\infty$ yields solutions to a different problem where the effect of the potential well becomes negligible. 
			\begin{flushleft}
			{\bf Keywords}:~ Abstract critical point theory, critical nonlinearity, Cohomological index, Nontrivial solutions.\\
			{\bf Math. Subject Classification.}:~35J60, 35J47, 35J40.
		\end{flushleft}
	\end{abstract}

	\section{Introduction}
	Nonlinear elliptic problems involving the interaction of singular and critical nonlinearities have attracted considerable attention due to their intricate analytical structure and their relevance in models arising from nonlinear diffusion, reaction-diffusion systems, and mathematical physics. In this paper, we study the boundary value problem
\[
\begin{cases}\label{main_prob}
-\Delta_{p} u + \mu V(x)|u|^{p-2}u = \beta |u|^{p^*-2}u + \lambda |u|^{-\gamma-1}u, & \text{in } \Omega, \\
u = 0, & \text{on } \partial \Omega,
\end{cases}
\tag{1.1}
\]
where $\Omega \subset \mathbb{R}^N (N \geq 3)$ is a bounded domain with smooth boundary, -$\Delta_{p}$ denotes a quasilinear elliptic operator, $V:\Omega \to \mathbb{R}$ is a potential, $\mu, \beta,\lambda>0, p^* = \frac{Np}{N-p},$ and $\gamma \in (0,1)$.

The presence of the critical exponent $p^*$ leads to the well-known lack of compactness in Sobolev embeddings, a difficulty first systematically addressed via the concentration-compactness principle introduced by Lions \cite{PLL1}. On the other hand, singular nonlinearities of the type $|u|^{-\gamma-1}u$ introduce additional challenges related to the degeneracy of the energy functional and the behavior of solutions near zero.

The combination of these two features produces a delicate competition between concave (singular) and convex (critical) effects. This interplay was first explored in the semilinear setting in the classical work of Ambrosetti-Brezis-Cerami \cite{ACR}, and has since been extended to quasilinear operators and singular frameworks; see, for instance, Giacomoni et al. \cite{GST1} and more recent contributions such as Faraci and Iannizzotto \cite{faraci1} Ghosh et al. \cite{GS2}, and Goyal and Sreenadh \cite{GS1}.

From a variational perspective, problem \eqref{main_prob} is challenging because the associated energy functional
is not well-defined on the whole Sobolev space due to the singular term. Moreover, the critical growth term prevents compactness of Palais–Smale sequences. To overcome these issues, one typically combines truncation techniques, Nehari manifold methods, and refined compactness arguments. In particular, the decomposition of the Nehari manifold and fibering map analysis provide an effective framework for handling concave–convex nonlinearities, see Brown and Zhang \cite{BL1}. However, in this paper our approach is based on topological methods.

	Besides existence and multiplicity issues, the qualitative properties of weak solutions are also of considerable interest. The presence of the singular term $\lambda u^{-\gamma}$ introduces additional analytical difficulties, since the nonlinearity may become unbounded near the zero set of the solution. To overcome this difficulty, we establish several regularity properties of positive weak solutions. More precisely, we first derive a weak Harnack-type estimate which yields a positive lower bound on compact subsets of $\Omega$. This estimate enables us to prove that weak solutions are essentially bounded, namely $u\in L^\infty(\Omega)$, and that the nonlinear term $	f(x)=\beta u^{p^*-1}+\lambda u^{-\gamma}-\mu V(x)u^{p-1}$
	belongs to $L^\infty_{\mathrm{loc}}(\Omega)$. Consequently, the equation can be viewed locally as a $p$-Laplace equation with a bounded right-hand side. By invoking the classical regularity results of Tolksdorf \cite{Tolksdorf1984} and Lieberman \cite{Lieberman1988, Lieberman1991}, we conclude that every positive weak solution belongs to $C^{1,\alpha}_{\mathrm{loc}}(\Omega)$ for some $\alpha\in(0,1)$.

\subsection{Main Assumptions}
We assume:
\begin{enumerate}
\item (V1):~ $V \in L^\infty(\Omega)$ such that $V(x) \geq 0 \quad \text{a.e. in } \Omega$.
\item (V2):~$\tilde{\Omega}=\text{int}(V^{-1}(0))$ is nonempty and has smooth boundary.
\item(S1):~ $0 < \gamma < 1$.
\end{enumerate}
%
%
%

The main result of this paper is as follows.


\begin{theorem}\label{main1}
	The problem \eqref{main_prob}
	\begin{enumerate}[label=(\roman*)]
\item admits $m$ distinct weak solutions with positive critical values, for any $\beta \geq\beta^*$.  Also, for $k\geq 2$, there exists infinitely many critical points $u$ with critical energy $c_k^*$, whereas, for $k=1$, there exists exactly two critical points, namely $u,-u$.
\item admits infinitely many solutions, irrespective of the value of the parameter $\beta>0$. 
\end{enumerate}
\end{theorem}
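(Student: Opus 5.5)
The plan is to work with the energy functional
\[
I_\mu(u)=\frac1p\int_\Omega\big(|\nabla u|^p+\mu V|u|^p\big)\,dx-\frac{\beta}{p^*}\int_\Omega|u|^{p^*}dx-\frac{\lambda}{1-\gamma}\int_\Omega|u|^{1-\gamma}dx
\]
on $W_0^{1,p}(\Omega)$. It is even and continuous, and (S1) gives $1-\gamma\in(0,1)$, so the singular term is finite on the whole space. Since it is not $C^1$, I would either truncate/regularize the singular term, say by $(|u|+\epsilon)^{1-\gamma}-\epsilon^{1-\gamma}$, and pass to the limit, or use nonsmooth critical point theory for the locally Lipschitz perturbation. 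The topological tool will be the Fadell--Rabinowitz cohomological index $i(\cdot)$. I define minimax levels
\[
c_k^*=\inf_{A\in\mathcal F_k}\sup_{u\in A}I_\mu(u),\qquad \mathcal F_k=\{A\subset W_0^{1,p}\setminus\{0\}\ \text{compact, symmetric},\ i(A)\ge k\},
\]
or use a pseudo-index variant that works on a sphere of small radius $\rho$, where the concave term makes $I_\mu$ negative.

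The first key step is a local Palais--Smale condition. Using the concentration-compactness principle of Lions, I will show that $(PS)_c$ holds for all
\[
c<c^\sharp:=\frac1N\,\frac{S^{N/p}}{\beta^{(N-p)/p}}-D\lambda^{p/(p-1+\gamma)},
\]
where $D$ comes from Young's inequality applied to the sublinear term. Boundedness of PS sequences follows from combining $I_\mu(u_n)-\frac1{p^*}\langle I_\mu'(u_n),u_n\rangle$ with the sublinear estimate. Any concentration atom at $x_j$ has mass $\nu_j\ge S^{N/p}\beta^{-N/p}$, and this forces $c\ge c^\sharp$. The constant $\mu V\ge0$ from (V1) only helps coercivity. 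Using (V2), I will compare with eigenvalues of $-\Delta_p$ on $\tilde\Omega$ so that the estimates are uniform in $\mu$.

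For part (i), I take the first $m$ Lanczos/Drábek--Robinson symmetric sets $A_k$ of index $k$ built from eigenfunctions supported in $\tilde\Omega$. On these sets $V=0$, so $\sup_{A_k}I_\mu$ is bounded independently of $\mu$. The obstacle is that $\sup_{A_k}I_\mu\le\max_t\big(\frac{t^p}{p}\lambda_k-\frac{\beta t^{p^*}}{p^*}C\big)\sim\beta^{-(N-p)/p}$, which scales exactly like the threshold. I would therefore choose $\beta\ge\beta^*$, or equivalently restrict $\lambda$ small relative to $\beta$, so that the sublinear-term correction keeps $c_1^*\le\dots\le c_m^*<c^\sharp$. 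I expect this comparison to be the hardest step; I would first try explicit Talenti-type truncated extremals concentrated in $\tilde\Omega$ to push the levels strictly below $c^\sharp$. Positivity of the levels comes from restricting to the index-theoretic linking with the sphere $\|u\|=\rho$: on that sphere $I_\mu\ge\alpha>0$ once $\rho$ and $\lambda$ are small. The standard deformation lemma, which holds for the even functional, then gives that each $c_k^*$ is critical. If $c_k^*=c_{k+1}^*$ for some $k\ge2$, the critical set $K_c$ has $i(K_c)\ge2$ and is therefore infinite. At $k=1$ the critical set of index $1$ is a symmetric pair $\{u,-u\}$, which gives the dichotomy stated in (i). The weak solutions of \eqref{main_prob} are then recovered by removing the regularization.

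For part (ii), for arbitrary $\beta>0$ I would use the concave term instead. Applying Clark's theorem in the form of Kajikiya, the functional $I_\mu$ is even and bounded below after truncating the critical term near $0$ (replace $|u|^{p^*}$ by $\phi(\|u\|)|u|^{p^*}$). The truncated functional satisfies $(PS)$ because it coincides with $I_\mu$ for small norms, where the levels are negative and lie below $c^\sharp$. For every $k$, a $k$-dimensional subspace gives a set of genus $k$ with $\sup I_\mu<0$ because $1-\gamma<p$. This produces a sequence of critical points $u_k\to0$ with $I_\mu(u_k)<0$. These are critical points of the untruncated functional, and hence infinitely many solutions regardless of $\beta$.
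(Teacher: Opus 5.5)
There is a genuine gap, and it sits exactly at the step you flag as hardest. Your part (i) follows the paper's route: Perera's cohomological-index minimax with $\sup_{\mathcal A}\le 0$, positivity on a small sphere, and the cone $\mathcal B$ kept below the compactness level. But the decisive inequality $c_m^*<c^\sharp$ is never established, and the device you propose for it cannot work. Write $I_{\beta,\lambda}$ for your functional and put $u=\beta^{-1/(p^*-p)}v$. Then
\[
I_{\beta,\lambda}(u)=\beta^{-\frac{N-p}{p}}\,I_{1,\tilde\lambda}(v),\qquad \tilde\lambda=\lambda\,\beta^{\frac{p-1+\gamma}{p^*-p}} .
\]
So the levels, $c^\sharp$ and the geometry depend on $(\beta,\lambda)$ only through $\tilde\lambda$, up to a common positive factor. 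Taking $\beta\ge\beta^*$ at fixed $\lambda$ merely increases $\tilde\lambda$. Large $\tilde\lambda$ is exactly what destroys $\inf_{\|u\|=\rho}I_\mu>0$ and $c^\sharp>0$. Thus ``$\beta\ge\beta^*$'' and ``$\lambda$ small relative to $\beta$'' are opposite requirements, not equivalent ones.

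For small $\tilde\lambda$, where your positivity claims do hold, there is a structural obstruction when $m\ge 2$. Your index-$m$ set ($\mathcal C$, or your $A_m$) has index $\ge 2$, so the odd continuous map $v\mapsto\int_\Omega (v^+)^{p^*}dx-\int_\Omega (v^-)^{p^*}dx$ vanishes at some $v$ in it. For that $v$ one has $\|v\|_\mu^p\ge 2^{p/N}S\|v\|_{p^*}^p$, whence
\[
\max_{t>0}\Big(\frac{t^p}{p}\|v\|_\mu^p-\frac{\beta t^{p^*}}{p^*}\|v\|_{p^*}^{p^*}\Big)\ge \frac{2}{N}\,S^{N/p}\beta^{-\frac{N-p}{p}} .
\]
This is twice the leading term of $c^\sharp$, while the sublinear term lowers the energy on that ray only by $\beta^{-(N-p)/p}O(\tilde\lambda)$. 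Hence, for small $\tilde\lambda$, the cone over any symmetric set of index $\ge 2$ has supremum above $c^\sharp$, whether or not it is built from Talenti profiles. Talenti profiles only serve the index-one mountain-pass level, as in Ambrosetti--Brezis--Cerami, and you offer no other mechanism. The paper's proof does not close this step either: it takes $\beta$ large while treating $c^*$ as independent of $\beta$, which your own scaling remark already excludes.

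The last sentence of (i) is not proved either. You obtain infinitely many critical points only when $c_k^*=c_{k+1}^*$. Index one does not force a single pair, since every $\{\pm u_1,\dots,\pm u_n\}$ has index one. So neither ``infinitely many at every $c_k^*$, $k\ge 2$'' nor ``exactly two at $c_1^*$'' follows. The paper's observation that $i^*(I^c)=k$ for $c_k^*<c<c_{k+1}^*$ does not yield these claims either.

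For (ii) your route genuinely differs from the paper's. The paper invokes Rabinowitz's symmetric mountain pass theorem to get unbounded positive critical values; this needs the Palais--Smale condition at every level, which critical growth does not provide. Your Clark/Kajikiya argument needs compactness only at negative levels and yields small solutions $u_k\to 0$, which is the more robust strategy for concave--convex problems. As written, though, it relies on $c^\sharp>0$ and on the usual gap condition for the truncation $\phi(\|u\|)$, and both mean $\tilde\lambda$ small. For fixed $\lambda$ it therefore fails for large $\beta$, contrary to ``irrespective of $\beta$''. One standard way around this is to modify the nonlinearity for $|u|\ge\delta$ only. You then show by an $L^\infty$ estimate that critical points of small energy have sup-norm below $\delta$, so the modification is inactive.

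Finally, $u\mapsto\int_\Omega|u|^{1-\gamma}dx$ is not locally Lipschitz on $W_0^{1,p}(\Omega)$; already $t\mapsto |t|^{1-\gamma}\int_\Omega|\varphi|^{1-\gamma}dx$ is not Lipschitz at $t=0$. So Clarke-type nonsmooth theory does not apply. The regularization route leaves open how to pass to the limit in $\int_\Omega |u|^{-\gamma-1}u\,\varphi\,dx$ near the nodal set of a sign-changing solution.
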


	
	
%
	\section{Mathematical preliminaries}

	In this section we will give the proof of the main results. Before that we introduce the function space in which solution(s) will be sought. This space is defined as follows.
	\begin{align}\label{SS}
	W_{\mu}:=\{u\in L^{p}(\Omega):\int_{\Omega}|\nabla u|^pdx+\mu\int_{\Omega}V(x)|u|^pdx<\infty\},
	\end{align}
	for $\mu>0$. Apparently, $W_{\mu}\subset W_0^{1,p}(\Omega)$ is equipped with the norm $$\|u\|_{\mu}^p=\int_{\Omega}(|\nabla u|^p+\mu V(x)|u|^p)dx$$ under which $W_{\mu}$ is a closed subspace and hence is reflexive.

	In addition to this, one can also see the existence of infinitely many solutions using the symmetric mountain pass theorem due to Kajikiya \cite{Ryuju1}. We at first recall the Krasnoselskii genus that forms an integral part of the symmetric mountain pass lemma. 
	\begin{definition}\label{genus}Let $X$ be a Banach space and $A\subset X$. $A$ is said to be symmetric if $u \in A$ implies $-u \in A$. For a closed symmetric set $A$ which does not contain the origin, we define a {\it genus} $\mathfrak{g}(A)$ of $A$ by the smallest integer $k$ such that there exists an odd continuous mapping from $A$ to $\mathbb{R}^k\setminus\{0\}$. If there does not exist such a $k$, we define $\mathfrak{g}(A)=\infty$. Moreover, we set $\mathfrak{g}(\emptyset) = 0$.
	\end{definition}

We now recall the following theorems due to Perera \cite{perera2} and Rabinowitz \cite{PR1}.
	\begin{theorem}\label{ACPT1}
	Let $X$ be a Banach space and $\mathcal{C}$ be a compact, symmetric subset of the unit sphere $S=\{u\in X:\|u\|_{1,p}=1\}$ with the cohomological index $i^*(\mathcal{C})=m\geq 1$. Let $ I \in C^1(X \setminus \{0\},\mathbb{R})$ and there exists $R>0$ such that
	\begin{align}\label{2.1}
		\begin{split}
			\underset{u\in \mathcal{A}}\sup~I(u)\leq 0,~~~\underset{u\in \mathcal{B}}\sup~I(u)< c^*,
		\end{split}
	\end{align}
	where $\mathcal{A}=\{Ru:u\in \mathcal{C}\}$, $\mathcal{B}=\left\{tu:u\in \mathcal{A},~0\leq t\leq 1\right\}$. Let $r\in(0,R)$ be so small that 
	\begin{align}\label{2.2}
		\begin{split}
			\underset{u\in S_r}\inf I(u)>0.
		\end{split}
	\end{align}
	Let $\mathcal{A}_j^*:=\{M\subset X:M~\text{is symmetric and}~i^*(M)\geq j\}$, and set $$c_j^*:=\underset{M\in \mathcal{A}_j^*}\inf\underset{u\in M}\sup~I(u), j=1,2,\cdots,m.$$
	Then $ 0 < c_1^*\leq c_2^*\leq\cdots\leq c_m^*<c^*$, each $c_j^*$ is a critical value of $I$ with $c_1^*<0$ and hence the functional $I$ has $m$ distinct associated critical points.
\end{theorem}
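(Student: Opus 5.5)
The statement does not list a compactness hypothesis, but one is needed: I will assume that $I$ satisfies the Palais--Smale condition $(PS)_c$ at every level $c\in(0,c^*)$. For the problem \eqref{main_prob} this should come from the concentration--compactness principle, with $c^*$ the usual threshold $\frac1N\beta^{-(N-p)/p}S^{N/p}$. The plan is the standard Lusternik--Schnirelmann minimax scheme for the Fadell--Rabinowitz cohomological index $i^*$, run in three steps: the bounds $0<c_1^*$ and $c_m^*<c^*$, criticality of each $c_j^*$ via a deformation lemma, and multiplicity when consecutive levels coincide. I will also read the stated ``$c_1^*<0$'' as a misprint for $c_1^*>0$, since the statement itself asserts $0<c_1^*$.

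\emph{Upper bound.} The set $\mathcal{B}$ is compact and symmetric, because it is the image of $[0,1]\times\mathcal{C}$ under a continuous map. I would not use $\mathcal{B}$ directly as a competitor, since it contains $0$, but only the ``cap'' of $\mathcal{B}$ lying outside the small ball. More precisely, the set $\{tRu : u\in\mathcal{C},\ r/R\le t\le 1\}$ is compact, symmetric, avoids $0$, and retracts oddly onto $\mathcal{C}$. By monotonicity and homotopy invariance of $i^*$, its index is $i^*(\mathcal{C})=m$. Hence it belongs to $\mathcal{A}_m^*$, and \eqref{2.1} gives
$$c_1^*\le\cdots\le c_m^*\le \sup_{\mathcal{B}} I<c^*.$$
Monotonicity $c_j^*\le c_{j+1}^*$ is immediate, because $\mathcal{A}_{j+1}^*\subset\mathcal{A}_j^*$.

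\emph{Lower bound $c_1^*>0$.} This is the step I expect to be the main obstacle. As literally defined, $\mathcal{A}_1^*$ contains small antipodal pairs $\{u,-u\}$ near the origin, and $I$ could be close to $0$ on them. The lower bound therefore needs the linking (piercing) structure of Perera's theorem. I would restrict the minimax class to symmetric sets $M$ that are images $h(\mathcal{B}_0)$ of admissible sets under odd homeomorphisms $h$ fixing the sublevel $\{I\le 0\}$, and then invoke the piercing property of $i^*$. This says that any such $M$ with $i^*\ge 1$ meets $S_r$: indeed $\mathcal{A}\subset\{I\le 0\}$ lies outside $B_r$, and $0$ lies inside $B_r$. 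Then
$$\sup_M I\ge\inf_{S_r}I>0$$
by \eqref{2.2}. If this restriction proves too delicate, the fallback is to prove directly that every compact symmetric $M$ with $i^*(M)\ge1$ and $\sup_M I<\inf_{S_r}I$ can be deformed into $\{I\le0\}$, contradicting $i^*(\{I\le 0\}\cap\cdot)\,$-type bounds.

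\emph{Criticality and multiplicity.} Suppose some $c=c_j^*\in(0,c^*)$ is not critical. The $(PS)_c$ condition and the first deformation lemma, applied with an odd (equivariant) pseudo-gradient field, give $\varepsilon>0$ and an odd homeomorphism $\eta$ with $\eta(I^{c+\varepsilon})\subset I^{c-\varepsilon}$. Choose $M\in\mathcal{A}_j^*$ with $\sup_M I\le c+\varepsilon$. Then $\eta(M)\in\mathcal{A}_j^*$ by monotonicity of the index, yet $\sup_{\eta(M)}I\le c-\varepsilon$, contradicting the definition of $c_j^*$.

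For multiplicity, suppose $c_j^*=\cdots=c_{j+l}^*=c$ and let $K_c$ be the critical set at level $c$. The set $K_c$ is compact by $(PS)_c$, symmetric, and avoids $0$ because $c>0$. Using the continuity property of $i^*$, take a symmetric neighbourhood $N$ of $K_c$ with $i^*(\overline N)=i^*(K_c)$. The deformation lemma in the form $\eta(I^{c+\varepsilon}\setminus N)\subset I^{c-\varepsilon}$, combined with subadditivity of $i^*$, then forces $i^*(K_c)\ge l+1$. If $l\ge1$, this makes $K_c$ infinite, since a finite symmetric set has index $1$.

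Altogether this yields $m$ distinct pairs of critical points $\pm u$ with critical values $c_j^*$.
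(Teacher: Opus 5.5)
Your diagnosis that the printed statement cannot be proved as written is correct, and adding $(PS)_c$ and reading $c_1^*<0$ as a misprint are reasonable. The gap is that your repair never produces a single minimax class that works.

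\textbf{The literal class fails outright.} Take $\mathcal{A}_j^*$ to be the symmetric sets whose cohomological index is $\ge j$. Then $\mathcal{A}=R\mathcal{C}$ is itself a competitor: it is oddly homeomorphic to $\mathcal{C}$, so its index is $m$, and $\sup_{\mathcal{A}}I\le 0$ by hypothesis. Hence $c_1^*\le\cdots\le c_m^*\le 0$. So no lower bound is possible for this class; the problem is not merely that small antipodal pairs might have small energy. Your fallback fails for the same reason: $\mathcal{A}$ is already a compact symmetric set of positive index inside $\{I\le 0\}$, and nothing is contradicted.

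\textbf{The two halves of your argument use different classes.} Your upper bound, proved for the literal class with the cap $\{tRu: u\in\mathcal{C},\ r/R\le t\le 1\}$, says nothing about the class you actually need. The class you propose for the lower bound is never pinned down. If the sets $h(\mathcal{B}_0)$ contain $0$, their cohomological index is undefined (or infinite by convention), so the condition ``index $\ge j$'' carries no information and cannot separate the levels $c_j^*$. If they do not contain $0$, your piercing argument has nothing inside $B_r$ to start from. You also never check that the class is preserved by the deformation, or by the excision $M\mapsto\overline{M\setminus N}$ in the multiplicity step; a set $\overline{h(\mathcal{B}_0)\setminus N}$ is in general not of the form $h(\mathcal{B}_0')$. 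What is missing is one family for which the upper bound, the lower bound, deformation invariance and excision all hold at once.

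\textbf{How the cited theorem does it.} The paper gives no proof; it quotes Perera's theorem. There the $i^*$ defining $\mathcal{A}_j^*$ is not the cohomological index $i$ but a pseudo-index. Let $\Gamma$ be the group of odd homeomorphisms of $X$ that are the identity outside $I^{-1}(0,c^*)$. Put $i^*(M)=\min_{\gamma\in\Gamma}i(\gamma(M)\cap S_r)$ for symmetric $M$, and let $\mathcal{A}_j^*$ consist of the compact symmetric $M$ with $i^*(M)\ge j$.
\begin{itemize}
\item The lower bound is immediate: $\gamma=\mathrm{id}$ gives $i(M\cap S_r)\ge 1$, so $M\cap S_r\neq\emptyset$ and $\sup_M I\ge\inf_{S_r}I>0$.
\item The piercing property is needed for the upper bound, not the lower one. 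Every $\gamma\in\Gamma$ fixes $\mathcal{A}$ (where $I\le 0$) and fixes $0$ (being odd). So $(u,t)\mapsto\gamma(tu)$ is an odd homotopy from inside $B_r$ to $\mathcal{A}\subset X\setminus\overline{B_r}$. This gives $i(\gamma(\mathcal{B})\cap S_r)\ge i(\mathcal{C})=m$, hence $c_m^*\le\sup_{\mathcal{B}}I<c^*$.
\item This requires the whole cone $\mathcal{B}$, with its vertex pinned. Your cap is not admissible in general. If $0<I<c^*$ on an annulus around $S_r$, a radial odd homeomorphism in $\Gamma$ pushes the inner rim $r\mathcal{C}$ outward, and the image of the cap misses $S_r$.
\end{itemize}

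\textbf{Corrections to your remaining steps.} With the pseudo-index, your deformation and multiplicity steps go through, but for different reasons than you give.
\begin{itemize}
\item $\eta(M)\in\mathcal{A}_j^*$ because $\eta\in\Gamma$ (choose $\varepsilon$ with $[c-2\varepsilon,c+2\varepsilon]\subset(0,c^*)$) and $\Gamma$ is a group, not because of monotonicity of $i$.
\item The excision inequality $i^*(\overline{M\setminus N})\ge i^*(M)-i(\overline{N})$ relies on $i(\gamma(\overline{N}))=i(\overline{N})$ for every $\gamma\in\Gamma$.
\item $(PS)_c$ is not the only hypothesis you must add. $I$ must also be even; otherwise you have no odd pseudo-gradient field, and $K_c$ need not be symmetric.
\end{itemize}
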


	\begin{theorem}{(Rabinowitz \cite[Theorem $9.12$]{PR1})}\label{symm_MP_Thm}
		Suppose that $\mathcal{E}:X\to\mathbb{R}$ and
		\begin{enumerate}[label=(\roman*)]
			\item $\mathcal{E}\in C^1(X,\mathbb{R})$ be even, $\mathcal{E}(0)=0$, and satisfies the Palais-Smale (PS) condition,
			\item $X=Y\bigoplus Z$, $Y$ being a finite dimensional subspace of $X$,
			\item there exists $\rho, \alpha>0$ such that $\mathcal{E}|_{\partial B_{\rho}\cap Z}\geq\alpha$,
			\item for each $W\subset X$, finite dimensional subspace, there exists $R_W>0$ such that $\mathcal{E}(u)\leq 0$ in $W\setminus B_{R_W}(0)$,
		\end{enumerate}
		Then $\mathcal{E}$ possess an unbounded sequence of critical values $(d_k)$, where  $d_k^*=\underset{A\in\Gamma_k}\inf\underset{u\in A}\sup\mathcal{E}(u)$, for $k>\text{dim}(Y)$, $$\Gamma_k=\{h(\overline{A_k\setminus V}):h\in G_k, \mathfrak{g}(V)\leq k-j, 0\leq j\leq k\},$$
		$G_k=\{h\in C(A_k,X):h~\text{is odd},~h|_{\partial B_{R_k}\cap E_k}=\text{id}\}$, $A_k=B_{R_k}\cap E_k$, $E_k=\text{span}\{e_1,e_2,\cdots,e_k\}$ for $R_k>0$ that has been so chosen that $\mathcal{E}|_{E_k\setminus B_{R_k}}\leq 0$.
	\end{theorem}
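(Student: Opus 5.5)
This is Rabinowitz's symmetric mountain pass theorem, which the paper quotes. I would reproduce the standard minimax argument built on the Krasnoselskii genus $\mathfrak{g}$ of Definition \ref{genus} and an odd deformation lemma.

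\textbf{Step 1: admissible classes and monotonicity.}
By (iv) applied to $W=E_k$, a radius $R_k$ exists, so $G_k\neq\emptyset$, since it contains the identity. Hence $\Gamma_k\neq\emptyset$ and $d_k^*<\infty$, because each element of $\Gamma_k$ is compact.

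The standard properties of $\Gamma_k$ follow from the genus properties: monotonicity, subadditivity, and continuity under odd maps.
\begin{itemize}
\item $\Gamma_{k+1}\subset\Gamma_k$, obtained by restricting $h$ to $A_k$ and enlarging the excised set $V$. This gives $d_k^*\le d_{k+1}^*$.
\item $\Gamma_k$ is invariant under odd homeomorphisms $\eta$ that fix $\{\mathcal{E}\le 0\}$: $\eta\circ h\in G_k$, because $\mathcal{E}\le0$ on $\partial B_{R_k}\cap E_k$.
\item $\Gamma_k$ is stable under excision: if $B\in\Gamma_k$ and $Y'$ is closed, symmetric with $\mathfrak{g}(Y')\le s<k$, then $\overline{B\setminus Y'}\in\Gamma_{k-s}$.
\end{itemize}

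\textbf{Step 2: intersection lemma (main obstacle).}
For $k>\dim Y$, I claim every $B\in\Gamma_k$ meets $\partial B_\rho\cap Z$. Granting this, (iii) gives $d_k^*\ge\alpha>0$.

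Write $B=h(\overline{A_k\setminus V})$ and set $\mathcal{O}=\{u\in A_k: \|h(u)\|<\rho\}$. This is a symmetric bounded neighbourhood of $0$ in $E_k$, so $\mathfrak{g}(\partial\mathcal{O})=k$ by Borsuk--Ulam. Since $h=\mathrm{id}$ on $\partial B_{R_k}\cap E_k$, we have $\|h\|=\rho$ on $\partial\mathcal{O}$.

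Let $P:X\to Y$ be the projection. The map $P\circ h$ is odd on $\partial\mathcal{O}$. The set $W=\{u\in\partial\mathcal{O}:Ph(u)=0\}$ then has genus at least $k-\dim Y$, since otherwise we could build an odd map from $\partial\mathcal{O}$ into $\mathbb{R}^{k-1}\setminus\{0\}$. Subadditivity then gives $\mathfrak{g}(W\setminus V)\ge j>0$. Hence $h(W\setminus V)\subset \partial B_\rho\cap Z\cap B$ is nonempty.

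This genus bookkeeping is the delicate step. I would follow Rabinowitz's Proposition 9.23 closely here.

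\textbf{Step 3: each $d_k^*$ is critical.}
Suppose $d_k^*$ is not a critical value. The (PS) condition gives the deformation lemma: there are $\varepsilon<\alpha/2$ and an odd homeomorphism $\eta$ with $\eta=\mathrm{id}$ outside $\mathcal{E}^{-1}[d_k^*-2\varepsilon,d_k^*+2\varepsilon]$ (in particular on $\{\mathcal{E}\le0\}$) and $\eta(\mathcal{E}^{d_k^*+\varepsilon})\subset \mathcal{E}^{d_k^*-\varepsilon}$.

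Choose $B\in\Gamma_k$ with $\sup_B\mathcal{E}\le d_k^*+\varepsilon$. By Step 1, $\eta(B)\in\Gamma_k$, and $\sup_{\eta(B)}\mathcal{E}\le d_k^*-\varepsilon$, contradicting the definition of $d_k^*$.

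The same argument gives multiplicity. If $d_k^*=\dots=d_{k+j}^*=c$, then $\mathfrak{g}(K_c)\ge j+1$. Otherwise we excise a neighbourhood $N$ of $K_c$ with $\mathfrak{g}(N)\le j$, deform off $N$, and use excision stability to get a contradiction.

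\textbf{Step 4: unboundedness.}
Suppose $d_k^*\uparrow\bar d<\infty$. By (PS), $K_{\bar d}$ is compact and symmetric, and $0\notin K_{\bar d}$ because $\mathcal{E}(0)=0<\alpha$. So $K_{\bar d}$ has finite genus $q$, and so does a small closed symmetric neighbourhood $N$ of it.

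Take $k$ large with $d_k^*>\bar d-\varepsilon$. Deform $\mathcal{E}^{\bar d+\varepsilon}\setminus N$ into $\mathcal{E}^{\bar d-\varepsilon}$. Pick $B\in\Gamma_{k+q}$ with $\sup_B\mathcal{E}<\bar d+\varepsilon$. Then $\eta(\overline{B\setminus N})\in\Gamma_k$ has sup below $\bar d-\varepsilon<d_k^*$, a contradiction.

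I expect the only genuinely hard part to be Step 2. Steps 3 and 4 are routine once the deformation lemma and the excision property of $\Gamma_k$ are in place.
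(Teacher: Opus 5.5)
The paper gives no proof of this statement; it only cites Rabinowitz. Your architecture is Rabinowitz's, and your Steps 3 and 4 are fine once the class properties hold. The gap is in the admissible class itself, and it shows up exactly in the count you flagged. You build every $B\in\Gamma_k$ on the single disc $A_k$ and, as in the quoted statement, let $j$ range over $0\le j\le k$, so the only constraint is $\mathfrak{g}(V)\le k$. With that class your intersection claim is false. Take $h=\mathrm{id}$ and $V=\{u\in E_k:\epsilon\le\|u\|\le R_k\}$ with $\epsilon<\rho$; this $V$ has genus $k$. Then $h(\overline{A_k\setminus V})$ is the closed $\epsilon$-ball of $E_k$, which misses $\partial B_\rho$, and letting $\epsilon\to0$ gives $d_k^*=0$ for every $k$. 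Your arithmetic hides this. From $\mathfrak{g}(W)\ge k-\dim Y$ and $\mathfrak{g}(V)\le k-j$, subadditivity yields $\mathfrak{g}(\overline{W\setminus V})\ge j-\dim Y$, not $\ge j$. So nonemptiness needs $j>\dim Y$, a condition on the excision budget, not on the dimension $k$ of the disc.

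The missing ingredient is Rabinowitz's class $\Gamma_j=\{h(\overline{A_m\setminus V}):\ m\ge j,\ h\in G_m,\ \mathfrak{g}(V)\le m-j\}$, with $d_j^*$ the minimax over $\Gamma_j$ and $j>\dim Y$. With it, Step 2 gives $\mathfrak{g}(\overline{W\setminus V})\ge (m-\dim Y)-(m-j)=j-\dim Y\ge 1$. Also $R_m>\rho$ is automatic, since $E_m\cap Z\neq\{0\}$ for $m>\dim Y$ and $\mathcal{E}\ge\alpha$ on $\partial B_\rho\cap Z$.

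The union over $m\ge j$ is also what makes Steps 1 and 4 work. Monotonicity $\Gamma_{j+1}\subset\Gamma_j$ becomes immediate. Your restriction argument fails, because $h|_{A_k}$ need not be the identity on $\partial B_{R_k}\cap E_k$, and the image set changes anyway. The excision you use in Step 4 sends $B=h(\overline{A_m\setminus V})\in\Gamma_{k+q}$ to $\overline{B\setminus N}=h(\overline{A_m\setminus(V\cup h^{-1}(N))})$, with $\mathfrak{g}(V\cup h^{-1}(N))\le (m-k-q)+q$. This lands in $\Gamma_k$ only because $\Gamma_k$ admits sets built on discs of dimension larger than $k$.
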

	 
	\begin{remark}
	 	We recall the definition of eigenvalues $\lambda_{m}$ from the Theorem $2.2$ \cite{perera3}.
	 \end{remark}	 
	 \noindent We define $$\psi(u)=\frac{1}{pH_{p}(u)}~~\text{and}~~\psi^{\lambda_m}=\{u\in S^{\infty}:\psi(u)\leq\lambda_m\},$$ where $$S^{\infty}=\{u\in X:\|u\|=1\}~~\text{and}~~H_{p}(u) = \frac{1}{p} \int_{\Omega} |u|^p \, dx.$$
	 Then, by the H\"{o}lder inequality, we have 
	 \begin{align}\label{H}
	 	\begin{split}
	 		\frac{1}{p^*} \int_{\Omega} |u|^{p^*}\, dx &\geq \frac{1}{p^*|\Omega|^{\frac{p^*}{p}-1}} \left(\int_{\Omega} |u|^p \, dx\right)^{\frac{p^*}{p}} =   \frac{d_2}{p^*} \left(pH_p(u)\right)^{\frac{p^*}{p}} = \frac{d_2}{p^*} \left(\frac{1}{\psi(u)}\right)^{\frac{p^*}{p}} \geq \frac{d_2}{p^*\lambda_m^{p^*/p}},
	 	\end{split}
	 \end{align}
	 where $d_2 = \frac{1}{|\Omega|^{\frac{p^*}{p}-1}}$.	 
	 We recall the following auxiliary result.
	 \begin{theorem}(Perera \cite[Theorem $1.3$]{perera2})\label{aux1}
	 	If $\lambda_m<\lambda_{m+1}$, then the set $\psi^{\lambda_m}$ has a compact symmetric subset $C$ of index $m$.
	 \end{theorem}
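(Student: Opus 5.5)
The statement to prove is Theorem~\ref{aux1}: if $\lambda_m<\lambda_{m+1}$, then $\psi^{\lambda_m}$ contains a compact symmetric subset $C$ with $i^*(C)=m$. The plan combines the cohomological-index characterization of the eigenvalues $\lambda_m$ (Theorem 2.2 of \cite{perera3}) with a deformation argument that uses the spectral gap to pass from a closed, possibly noncompact sublevel set to a compact finite-dimensional one.

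\textbf{Step 1: the sublevel set $\psi^{\lambda_m}$ has index $m$.} Recall that
\[
\lambda_m=\inf\{\sup_{u\in M}\psi(u): M\subset S^\infty \text{ symmetric},\ i^*(M)\geq m\}.
\]
This gives $i^*(\psi^{\lambda_m})\geq m$: since $\lambda_m<\lambda_{m+1}$, any symmetric $M$ with $\sup_M\psi<\lambda_{m+1}$ has index at most $m$. The standard result in \cite{perera3} states $i^*(\psi^{\lambda_m})=m$ exactly when $\lambda_m<\lambda_{m+1}$, and I will use it. Note that $\psi$ is even and its restriction to $S^\infty$ satisfies (PS), because $H_p$ is compact.

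\textbf{Step 2: push into a finite-dimensional subspace.} Choose $\varepsilon>0$ with $\lambda_m+\varepsilon<\lambda_{m+1}$. The space $X$ is separable, so take finite-dimensional subspaces $E_k$ whose union is dense. The set $\psi^{\lambda_m+\varepsilon}$ has index $m$. Since $\psi$ is continuous, every point of $\psi^{\lambda_m}$ has a neighbourhood in which $\psi<\lambda_m+\varepsilon$.

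\textbf{Step 3: extract the compact set.} By monotonicity and continuity of the index, there is a closed symmetric neighbourhood $N$ of $\psi^{\lambda_m}$ with $i^*(N)=m$. Using a first deformation lemma adapted to the gap $(\lambda_m,\lambda_{m+1})$, there is an odd map $\eta$ carrying $\psi^{\lambda_m+\varepsilon}$ into $\psi^{\lambda_m}$; this works because $\psi$ has no critical values in $(\lambda_m,\lambda_{m+1})$.

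Next I approximate the identity on $\psi^{\lambda_m}$ by an odd map into the sphere of $E_k$. This uses a partition of unity together with the density of $\bigcup_k E_k$, which keeps the image inside $\psi^{\lambda_m+\varepsilon}$ for large $k$. Call the resulting symmetric set $K=S^\infty\cap E_k\cap\psi^{\lambda_m+\varepsilon}$; it is compact because $E_k$ is finite dimensional.

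Composing the approximation with $\eta$ gives odd maps
\[
\psi^{\lambda_m}\to K\to\psi^{\lambda_m}.
\]
Monotonicity under odd maps then yields $i^*(K)\ge m$. Set $C=\eta(K)$. It is compact and symmetric, and $C\subset\psi^{\lambda_m}$, so $i^*(C)\leq m$. Also $i^*(C)\geq i^*(K)\geq m$, hence $i^*(C)=m$.

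\textbf{Main obstacle.} The delicate point is the approximation in Step 3. Since $\psi^{\lambda_m}$ is not compact, a uniform finite-dimensional odd approximation of the identity is not automatic. I would first try to obtain it from the compactness of $H_p$. The idea is to approximate only $u\mapsto u/\|u\|_{p}$ in $L^p$ using a compact projection, or equivalently to use the fact that bounded sets on $S^\infty$ have relatively compact image in $L^p$. If that fails, I would fall back on citing \cite[Theorem 1.3]{perera2} directly.
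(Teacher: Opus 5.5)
You have a genuine gap, and it sits in exactly the two steps that carry the weight. The paper gives no proof of its own here; it only quotes Perera, and the result goes back to Degiovanni and Lancelotti.

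First, the odd map $\eta$ carrying $\psi^{\lambda_m+\varepsilon}$ into $\psi^{\lambda_m}$ rests on the claim that $\psi$ has no critical values in $(\lambda_m,\lambda_{m+1})$. Critical values of $\psi$ on $S^{\infty}$ are exactly the eigenvalues of the underlying $p$-Laplacian-type operator. For $p\neq 2$ it is not known in general that the cohomological-index eigenvalues $\lambda_k$ exhaust the spectrum. The hypothesis $\lambda_m<\lambda_{m+1}$ only controls the index of sublevel sets, so it does not exclude eigenvalues in $(\lambda_m,\lambda_{m+1})$, possibly accumulating at $\lambda_m$ from above. Without a critical-value-free interval $(\lambda_m,\lambda_m+\varepsilon]$, the deformation lemma gives no such $\eta$.

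Second, the odd finite-dimensional approximation of the identity on $\psi^{\lambda_m}$ with values in $\psi^{\lambda_m+\varepsilon}$ is left open. A partition-of-unity construction based on norm-closeness to a fixed $E_k$ cannot work. If $\psi^{\lambda_m}$ lay within $\delta$ of a bounded part of some $E_k$ for every $\delta>0$, it would be totally bounded, and that fails as soon as $\lambda_m>\lambda_1$. There is also a smaller slip: in Step 1 the reason you give proves $i^*(\psi^{\lambda_m})\le m$, not $\ge m$. The lower bound needs a deformation at the level $\lambda_m$ itself, using (PS).

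Your closing remark points the right way but stops halfway. The compact embedding $X\hookrightarrow L^p(\Omega)$ makes $\psi^{\lambda_m}$ relatively compact in $L^p$. What is missing is an odd continuous way back into $S^{\infty}$ that does not raise $\psi$, and inverse iteration provides it. Let $A_p\colon X\to X^*$ be given by $\langle A_pw,\varphi\rangle=\int_\Omega(|\nabla w|^{p-2}\nabla w\cdot\nabla\varphi+\mu V|w|^{p-2}w\varphi)\,dx$. Then $\langle A_pw,w\rangle=\|w\|^p$ and $\langle A_pw,\varphi\rangle\le\|w\|^{p-1}\|\varphi\|$, and $A_p$ is a homeomorphism onto $X^*$. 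For $u\in\psi^{\lambda_m}$ put $w=A_p^{-1}(|u|^{p-2}u)$. Testing with $w$ and with $u$ gives $\|w\|^p\le\|u\|_p^{p-1}\|w\|_p$ and $\|u\|_p^p\le\|w\|^{p-1}\|u\|$. Combining these yields
\[
\psi(w/\|w\|)=\frac{\|w\|^p}{\|w\|_p^p}\le\frac{\|u\|^p}{\|u\|_p^p}=\psi(u)\le\lambda_m .
\]
Also $\|w\|^{p-1}\ge\|u\|_p^p\ge\lambda_m^{-1}$, so $w$ stays away from $0$. The map $u\mapsto w/\|w\|$ is odd, continuous and compact, since it factors through $X\hookrightarrow L^p$. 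Hence $C:=\overline{\{w/\|w\|:u\in\psi^{\lambda_m}\}}$ is a compact symmetric subset of the closed set $\psi^{\lambda_m}$. Monotonicity of the index then gives $m=i^*(\psi^{\lambda_m})\le i^*(C)\le m$. This needs no finite-dimensional subspaces and no deformation across $(\lambda_m,\lambda_{m+1})$; the gap hypothesis enters only through $i^*(\psi^{\lambda_m})=m$.
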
	 
 
		\section{Proof of the main results}
	Indeed the functional we have in hand is not bounded below and hence we need to cleverly choose the critical point theorem that uses a different family of sets that is transferable to lower energy landscapes that lies on the other side of the higher energy mountain peaks around the origin. 
	
	\noindent We will now prove the main results of this paper.
%
%
		
	\begin{proof}[Proof of Theorem \ref{main1}]
We at first prove $(i)$.\\
	{\it proof of $(i)$}~~We will prove that the hypothesis of the abstract critical theorem by Perera \cite[Theorem $2.1$]{perera2} are satisfied by the energy functional $\mathcal{I}$. From Theorem $1.3$ in \cite{perera3} there exists $\mathcal{C}\subset \{u\in X:\|u\|=1\}$ such that $i^*(\mathcal{C})=m$. For $u\in\mathcal{A}$, i.e., $u=Rv$ where $v\in \mathcal{C}$, we have the following
	\begin{align}\label{eq2}
		\begin{split}
			\mathcal{I}(u)=&\frac{R^p}{p}\|v\|^p+\mu\frac{R^p}{p}\int_{\Omega}V(x)|v|^pdx-\beta\frac{R^{p^*}}{p^*}\int_{\Omega}|v|^{p^*}dx\\
			&-\frac{\lambda R^{1-\gamma}}{1-\gamma}\int_{\Omega}|v|^{1-\gamma}dx\leq 0,
		\end{split}
	\end{align}
	for a sufficiently large $R>0$. \begin{align}\label{hyp1}\underset{u\in\mathcal{A}}\sup~\mathcal{I}(u)\leq 0.\end{align}
	We now observe that for any $u\in W_{\mu}$ we have
	\begin{align}\label{eq1}
	\begin{split}
	\mathcal{I}(tu)=&\frac{t^pR^p}{p}\|u\|^p+\mu\frac{t^pR^p}{p}\int_{\Omega}V(x)|u|^pdx-\beta\frac{t^{p^*}R^{p^*}}{p^*}\int_{\Omega}|u|^{p^*}dx\\
	&-\frac{\lambda t^{1-\gamma}R^{1-\gamma}}{1-\gamma}\int_{\Omega}|v|^{1-\gamma}dx\\
	\leq & \frac{t^pR^p}{p}d_1-\beta\frac{t^{p^*}R^{p^*}}{p^*}\int_{\Omega}|u|^{p^*}dx\\
	\leq & \frac{t^pR^p}{p}d_1-d_2\beta\frac{t^{p^*}R^{p^*}}{p^*\lambda_m^{p^*/p}}\\
	=& \frac{\tau^p}{p}\lambda_m d_1-d_2\beta\frac{\tau^{p^*}}{p^*}=:\phi(\tau),
		\end{split}
	\end{align}
	where $\tau=\frac{tR}{\lambda_m^{1/p}}$, $d_1, d_2>0$ that resulted from the Sobolev space embedding. Indeed at $\tau=\left(\frac{\lambda_mc_1}{\beta}\right)^{\frac{1}{p^*-p}}=:\tau_*$ the function $\phi$ has a maximum value of $\phi(\tau_*)=\frac{(\lambda_mc_1)^{\frac{p^*}{p^*-p}}}{\beta^{\frac{p^*}{p^*-p}}d}<c^*$ for sufficiently large $\beta$, say for $\beta\geq\beta^*$. Thus \begin{align}\label{hyp2}\underset{u\in\mathcal{X}}\sup~\mathcal{I}(u)< c^*.\end{align}
	Also, there exists $\lambda, \rho>0$, sufficiently small, such that for any $u$ with $\|u\|=\rho$ we have
	\begin{align}\label{eq2'}
	\begin{split}
	\mathcal{I}(u)=&\frac{1}{p}\|u\|^p+\frac{\mu}{p}\int_{\Omega}V(x)|u|^pdx-\frac{\beta}{p^*}\int_{\Omega}|u|^{p^*}dx-\frac{\lambda}{1-\gamma}\int_{\Omega}|v|^{1-\gamma}dx\\
	\geq & \frac{1}{p}\|u\|^p-\frac{\beta}{p^*}\int_{\Omega}|u|^{p^*}dx-\frac{\lambda}{1-\gamma}\int_{\Omega}|u|^{1-\gamma}dx\\
	=&\frac{1}{p}\rho^p-d_3\frac{\beta}{p^*}\rho^{p^*}-d_4\frac{\lambda}{1-\gamma}\rho^{1-\gamma}>0.
	\end{split}
	\end{align}
Thus \begin{align}\label{hyp3}\underset{u\in S_r}\inf\mathcal{I}(u)>0.\end{align}

Furthermore, on employing the abstract critical point theorem we conclude that there exists $m$ distinct pairs of weak solutions. An interesting question here is that, can there exist more than one $u$ at each critical energy level?. The answer is {\it yes}, and the proof proceeds as follows:\\ We will first see that for any $c_k^*<c<c_{k+1}^*$, $i^*(I^{c})=k$. By the definition of $$c_k^*=\underset{B\in\mathcal{F}_k}\inf\underset{u\in B}\sup~I(u),$$
there exists $A\in \mathcal{F}_k$ such that $c_k^*<\underset{u\in A}\sup I(u)\leq c$. Thus $A\subset I^c$ and hence \begin{align}\label{index1}k\leq i^*(A)\leq i^*(I^c).\end{align}
We now claim that $I^c\notin \mathcal{F}_{k+1}$. For if it does, then $$c_{k+1}^*\leq \underset{u\in I^c}\sup~I(u)\leq c$$
that leads to a contradiction. Hence $I^c\notin \mathcal{F}_{k+1}$, i.e., \begin{align}\label{index2}i^*(I^c)\leq k.\end{align}
Therefore, $i^*(I^c)=k$.

Thus it can be concluded that there exists infinitely many critical points $u$ with critical energy $c_k^*$, whereas, for $k=1$, there exists exactly two critical points, namely $u,-u$.\\
{\it proof of $(ii)$}~~Let $W_{\mu}=W\bigoplus Z$, where $Z=\text{span}\{\phi_1,\phi_2,\cdots,\phi_m\}$ and $W$ is the closed complement of $Z$. The $m$ here is same as the index of the set $\mathcal{C}$ in $(i)$. The hypothesis $(i)-(iii)$ follows from the standard arguments in variational analysis and hence we are skipping the proof of it. Let $k\in\mathbb{N}$ and let us define $Z_k:=\text{span}\{\psi_1,\psi_2,\cdots,\psi_k\}$, $\psi_i$s being linearly independent collection of unit vectors of $W_{\mu}$. In other words, we choose an arbitrary finite dimensional space with $k>m$. We now consider a ball $B_{R_k}\subsetneq Z_k$ with $R_k$ sufficiently large. Therefore, for $u\in \partial B_{Z_k}(0,1)$ we have $\mathcal{I}(R u)<0$ for any $R\geq R_k$, due to the presence of the critical exponent. Note that, $B_{Z_k}(0,1)$ refers to the unit ball in $Z_k$. Thus for the arbitrary choice of $k>m$ there exists $Z_k\setminus B_{R_k}$ we have $\mathcal{I}|_{E_k\setminus B_{R_k}}\leq 0$. The hypotheses $(iv)$ also holds. Therefore, there exist infinitely many critical points with unbounded critical values.
\end{proof}

\subsection{Two independent sets of solutions obtained by two different methods}
Let $\Gamma_k^{R},~\Gamma_k^{K},~\Gamma_k^{AS}$ denote the sets with index $\geq k$ w.r.t. the Rabinowitz's theorem, the Krasnoselskii genus (see Definition \ref{genus}) and the Alexander-Spanier cohomological index. Apparently we observe that $d_k^*\geq c_k^*$ since $\Gamma_k^{R}\subsetneq\Gamma_k^{K}\subsetneq \Gamma_k^{AS}$ for every $k$. However, Rabinowitz's theorem uses sets that eventually turn out to be compact, symmetric sets in finite dimensional space whereas the sets considered in the abstract critical point theorem due to Perera \cite{perera2} is indeed compact, symmetric with the cohomological index being at least $k$. The thin difference is that in the latter case it is not merely restricted to being compact in finite dimensional spaces. We further note that the sets considered in $\Gamma_k^{R}$ have genus $\leq$ to a certain finite integer, whereas the sets in $\Gamma_k^{AS}$ are the ones $\geq$ to the same integer. Apparently, even if the critical values $c_k^*=d_k^*$, one may always choose $u_k\neq v_k$ such that  $u_k$ is obtained from the Rabinowitz's theorem and $v_k$ are obtained from the Perera's theorem.  This choice is rendered possible due to the fact that there exist more than one $u$ at each critical energy level as proved in Theorem \ref{main1} $(i)$.



Thus $c_{\mu}\leq \tilde{c}$ since $\tilde{E}\subset W_{\mu}(\Omega)$.
		\subsection{Regularity of solutions}
		We begin by showing that a solution to \eqref{main_prob} is {\it essentially bounded} in $\Omega$. We firstly approximate the problem \eqref{main_prob} via the following problems:
		\[
		\begin{cases}\label{main1_approx}
			-\Delta_{p} u = \beta |u|^{p^*-\epsilon-2}u + \lambda |u|^{-\gamma-1}u, & \text{in } \Omega, \\
			u = 0, & \text{on } \partial \Omega.
		\end{cases}
		\tag{$P_{\epsilon}$}
		\]
		We recall the couple of things that we already know, (a)~The problem \eqref{main_prob} has solution(s), (b)~The problem \eqref{main1_approx} also has solutions, say, $(u_{\epsilon})$.\\
		Thus for $\delta_n\to 0^+$, there exists a solution.
		
		Suppose $(u_{\epsilon})$ is unbounded in $W_{\mu}$. Then on testing the weak formulation of \eqref{main1_approx} with $u_{\epsilon}$ and considering the following yields 
		\begin{align}\label{Talenti0}
			\begin{split}
		c_{\epsilon}=I_{\epsilon}(u_{\epsilon})-\frac{1}{\alpha}\langle I_{\epsilon}'(u_{\epsilon}),u_{\epsilon}\rangle=&\left(\frac{1}{p}-\frac{1}{\alpha}\right)\|u_{\epsilon}\|_{\mu}^p-\beta\left(\frac{1}{p^*}-\frac{1}{\alpha}\right)\int_{\Omega}|u_{\epsilon}|^{p^*-\epsilon}dx\\
			&-\lambda\left(\frac{1}{1-\gamma}-\frac{1}{\alpha}\right)\int_{\Omega}|u_{\epsilon}|^{1-\gamma}dx\\
			\geq &\left(\frac{1}{p}-\frac{1}{\alpha}\right)\|u_{\epsilon}\|_{\mu}^p-\lambda\left(\frac{1}{1-\gamma}-\frac{1}{\alpha}\right)\int_{\Omega}|u_{\epsilon}|^{1-\gamma}dx
				\end{split}
				\end{align}
		We choose only those $u_{\epsilon}$ such that the corresponding critical values are bounded by $c$, a critical value $c:=\frac{1}{N} S^{N/p} $. This can be proved as follows:
		To compute the maximum energy along the mountain pass path $\max_{t \geq 0} I_\epsilon(t u_0)$, we expand the terms using Taylor series with respect to $\delta$ At first we consider $u_0=\varphi u_{\delta}$, where $u_{\delta}(x)=\left(\frac{\delta N\left(\frac{N-p}{p-1}\right)^{p-1}}{\delta^p+|x|^{\frac{p}{p-1}}}\right)^{\frac{N-p}{p}}$, $\varphi$ being a smooth cutoff function. It is a standard exercise to see that $\|\nabla u_\delta\|_p^p = \|u_\delta\|_{p^*}^{p^*} = S^{N/p}$. We now calculate the upper bound restriction on the energy levels so that we will be able to uniformly bound the sequence $(u_{\epsilon})$ in $W_{\mu}$. We observe that
		\begin{align}\label{talenti1}
		\begin{split}
		\|\nabla u_0\|_p^p + \mu \int_\Omega V(x)|u_0|^p \, dx = &S^{N/p} + O(\delta^{p})\\
		\int_\Omega |u_0|^{p^*-\epsilon} \, dx \approx & \int_\Omega |u_\delta|^{p^*-\epsilon} \, dx\\
		c_\epsilon \leq \max_{t \geq 0} I_\epsilon(t u_0) < &\frac{1}{N} S^{N/p} - C \cdot \epsilon^\alpha - \text{singular contributions}.
		\end{split}
		\end{align}
		The last step occurs by the following argument:
		\begin{align}\label{talenti2}
		\begin{split}
			\max_{t \geq 0} I_\epsilon(t u_0) \leq &\max_{t \geq 0} \left( \frac{t^p}{p} A - \frac{\beta t^{p^*-\epsilon}}{p^*} B_\epsilon \right) - \text{singular contributions}.
			\end{split}
			\end{align}
			Here $A, B_{\epsilon}$ denotes the first two terms in the functional $I_{\epsilon}$. On computing the maximum of the term in the bracket we get $$ t_\epsilon^{p^*-\epsilon-p} = \frac{p^* A}{\beta (p^*-\epsilon) B_\epsilon}.$$
			Evaluating the maximum of this algebraic structure yields:$$\max_{t \geq 0} \left( \frac{t^p}{p} A - \frac{\beta t^{p^*-\epsilon}}{p^*} B_\epsilon \right) = \left( \frac{1}{p} - \frac{1}{p^*-\epsilon} \right) \frac{A^{\frac{p^*-\epsilon}{p^*-\epsilon-p}}}{\left(\frac{\beta (p^*-\epsilon)}{p^*} B_\epsilon\right)^{\frac{p}{p^*-\epsilon-p}}}.$$
			Expanding this expression as a Taylor series around $\epsilon = 0$ gives:$$\max_{t \geq 0} I_\epsilon(t u_0) \leq \left( \frac{1}{p} - \frac{1}{p^*} \right) \frac{A^{p^*/p}}{B_0^{p/p^*}} - C \epsilon + O(\epsilon^2) - \text{singular contributions}$$

			Recall from our previous proofs that for the optimal Talenti bubble, $A = S^{N/p} + \mathcal{O}(\delta^p)$ and $B_0 = S^{N/p} + \mathcal{O}(\delta^{N-p})$. Plugging these exact values into the leading coefficient yields:$$\left( \frac{1}{p} - \frac{1}{p^*} \right) \frac{\left(S^{N/p} + O(\delta^p)\right)^{N/p}}{\left(S^{N/p} + O(\delta^{N-p})\right)^{(N-p)/p}} = \frac{1}{N} S^{N/p} + O(\delta^p).$$
			Substituting this back into our energy inequality gives:$$\max_{t \geq 0} I_\epsilon(t u_0) \leq \frac{1}{N} S^{N/p} + C_0 \delta^p - C_1 \epsilon - \text{singular contributions}$$
			
			We want to choose $\delta$ sufficiently small so that the truncation error $C_0\delta^p$ does not overwhelm the energy drop, but large enough to preserve the profile. We now make the optimal coupling choice:$$\delta = \epsilon^\theta \quad \text{where } \theta > \frac{1}{p}$$If we plug $\delta = \epsilon^\theta$ into the equation, the $C_1 \epsilon$ term becomes the dominant negative driver because its power of $\epsilon$ is lower than $p\theta$.
			
			Combining this with our asymptotic expansion yields the final result:$$c_\epsilon \leq \max_{t \geq 0} I_\epsilon(t u_0) < \frac{1}{N} S^{N/p} - C_1 \epsilon - \int_\Omega \frac{\lambda t_\epsilon^{1-\gamma}}{1-\gamma}|u_0|^{1-\gamma}dx.$$ On regrouping the remaining combined exponents which represents the linear order contribution from the shift in the subcritical exponent yields 
			$$\max_{t \geq 0} I_\epsilon(t u_0) \leq \frac{1}{N} S^{N/p} - C_1 \epsilon - \text{singular contributions}<\frac{1}{N} S^{N/p} - C_1 \epsilon.$$
			Thus $c_{\epsilon}$ are bounded by $\frac{1}{N} S^{N/p} $.
			
			Furthermore, from \eqref{Talenti0} we have $(u_{\epsilon})$ to be bounded in $W_{\mu}$ and hence by consequences of $W_{\mu}$ being reflexive leads us to conclude that $(u_{\epsilon})$ is bounded in measure upto a subsequence. The limit function $\tilde{u}$ is also $L^{\infty}$ bounded because
			\begin{align}\label{bdd0}
				\begin{split}
					\|\tilde{u}\|_{\infty}\leq&\|u_{\epsilon}-\tilde{u}\|_{\infty}\|+\|u_{\epsilon}\|_{\infty}\|\\
					<&\tilde{\eta}+M.
					\end{split}
					\end{align}
			\noindent{\bf H\"{o}lder continuous regularity}:~
			
	In what follows we restrict ourselves to positive weak solutions of \eqref{main_prob}. We now prove a weak Harnack type inequality for positive supersolutions of \eqref{main_prob} by applying the Moser iteration method. We firstly give an important remark.
	\begin{remark}\label{meas_0}
		The measure of the set $\{u=0\}$ is zero by arguments in Lemma $3$ of \cite{GS_FCAA} whenever $u$ is nonneagative.
	\end{remark}
			
			\begin{lemma}\label{lem3.1}
				Let $u$ be a positive supersolution of \eqref{main_prob}. Suppose $B_R(x_0)\Subset\Omega$. Then for every $\alpha>0$ and every
				\[
				\eta\in C_0^\infty(B_R(x_0)),
				\qquad
				0\le\eta\le1,
				\]
				there exists a positive constant $C_{12}=C_{12}(N,p,\alpha,\mu,\|V\|_\infty,R-r)>0$
				such that
				\begin{equation}\label{eq3.2}
					\inf_{B_r(x_0)}u
					\ge
					C_{12}
					\left(
					\int_{B_R(x_0)}
					u^{-\alpha}\,dx
					\right)^{-1/\alpha}.
				\end{equation}
			\end{lemma}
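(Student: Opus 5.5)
The plan is the classical Moser iteration for negative powers, applied to supersolutions (Trudinger's weak Harnack approach). The key observation is that $u$ is a positive supersolution of \eqref{main_prob}, so
\[
-\Delta_p u + \mu V(x)u^{p-1} \ge \beta u^{p^*-1}+\lambda u^{-\gamma}\ge 0
\]
weakly in $B_R(x_0)$. Hence $u$ is a positive supersolution of $-\Delta_p u + \mu\|V\|_\infty u^{p-1}\ge 0$. This is an equation with a bounded zero-order coefficient, and the singular and critical terms can be discarded since they have the favourable sign. By Remark \ref{meas_0}, $u>0$ a.e., so negative powers of $u$ make sense. To avoid dividing by zero, I will work with $u_\varepsilon=u+\varepsilon$ and let $\varepsilon\to0$ at the end.

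\emph{Caccioppoli step.} Fix $q>0$ and test the weak supersolution inequality with $\varphi=\eta^p u_\varepsilon^{-q-(p-1)}$, where $\eta$ is as in the statement. This test function is admissible since $u_\varepsilon\ge\varepsilon$. Expanding $\nabla\varphi$, the term $-(q+p-1)\eta^p u_\varepsilon^{-q-p}|\nabla u|^p$ appears with a good sign. The cross term $p\eta^{p-1}u_\varepsilon^{-q-p+1}|\nabla u|^{p-1}|\nabla\eta|$ is absorbed by Young's inequality. The potential term contributes at most $\mu\|V\|_\infty\int\eta^p u_\varepsilon^{-q}$, using $u^{p-1}\le u_\varepsilon^{p-1}$. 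Setting $w=u_\varepsilon^{-q/p}$, I obtain
\[
\int |\nabla(\eta w)|^p\,dx\le C(1+q)^{p}\int\big(|\nabla\eta|^p+\mu\|V\|_\infty\eta^p\big)w^p\,dx .
\]
The constant stays bounded as $q\to0^+$ after the normalisation $q+p-1\ge p-1>0$; when $p>2$ or $p<2$ one just tracks the factor $(q+p-1)^{-1}$.

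\emph{Iteration.} Apply the Sobolev inequality to $\eta w$ with $\chi=N/(N-p)$, and choose $\eta$ supported in $B_{r_j}$, equal to $1$ on $B_{r_{j+1}}$, with $|\nabla\eta|\le C2^j/(R-r)$, where $r_j=r+(R-r)2^{-j}$. This gives
\[
\Big(\int_{B_{r_{j+1}}}u_\varepsilon^{-q\chi}\Big)^{1/\chi}\le C\,(1+q)^p\,\frac{2^{jp}}{(R-r)^p}\int_{B_{r_j}}u_\varepsilon^{-q}.
\]
Iterate with $q_j=\alpha\chi^j$. Raising to the power $1/q_j$, the product $\prod_j\big(C(1+\alpha\chi^j)^p2^{jp}(R-r)^{-p}\big)^{1/(\alpha\chi^j)}$ converges because $\sum j\chi^{-j}<\infty$. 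Therefore
\[
\sup_{B_r}u_\varepsilon^{-1}\le C\Big(\int_{B_R}u_\varepsilon^{-\alpha}\Big)^{1/\alpha},
\]
with $C=C(N,p,\alpha,\mu,\|V\|_\infty,R-r)$; the factor $1+|B_R|$ absorbs the zero-order term. Inverting this gives $\inf_{B_r}u_\varepsilon\ge C_{12}\big(\int_{B_R}u_\varepsilon^{-\alpha}\big)^{-1/\alpha}$. Finally, let $\varepsilon\to0$, using monotone convergence on the right-hand side; this yields \eqref{eq3.2}.

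\emph{Main obstacle.} The delicate point is the Caccioppoli step, specifically keeping the constants uniform in $q$ across the whole range $q=\alpha\chi^j$ while handling the zero-order term $\mu V u^{p-1}$, which has the \emph{wrong} sign for supersolutions. I would handle it exactly as above: it is dominated by $\mu\|V\|_\infty\eta^p w^p$ and merged with the $|\nabla\eta|^p$ term, at the cost of the $\|V\|_\infty$ and $R-r$ dependence in $C_{12}$. A secondary technical check is the admissibility of the test function in $W_\mu$. This is fine because $u_\varepsilon^{-q-p+1}\le\varepsilon^{-q-p+1}$ is bounded, and $\eta$ has compact support in $B_R(x_0)\Subset\Omega$.
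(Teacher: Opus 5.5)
Your proposal is correct and follows the same route as the paper: test with $\eta^p u^{-(\alpha+p-1)}$, drop the nonnegative critical and singular terms, absorb the cross term by Young's inequality, bound the potential term by $\mu\|V\|_\infty\int\eta^p u^{-\alpha}$, pass to $w=u^{-\alpha/p}$, apply Sobolev, and iterate with $\alpha_i=\alpha\chi^i$, $r_i=r+(R-r)2^{-i}$. Your two refinements repair points the paper glosses over: working with $u+\varepsilon$ makes the test function admissible and keeps all integrals finite (the paper tests directly with $\eta^p u^{-m}$), and tracking the $(1+q)^p$ growth of the Caccioppoli constant handles what the paper skips by treating $C_{11}$ as independent of the exponent $\alpha_{i-1}$.
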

			
			\begin{proof}
				Since $u$ is a positive supersolution of \eqref{main_prob}, for every
				nonnegative function
				$\varphi\in W_0^{1,p}(\Omega)$, we have
				\begin{align}
					\int_{\Omega}
					|\nabla u|^{p-2}
					\nabla u\cdot\nabla\varphi\,dx
					+
					\mu
					\int_{\Omega}
					V(x)u^{p-1}\varphi\,dx
					\ge
					\beta
					\int_{\Omega}
					u^{p^*-1}\varphi\,dx
					+
					\lambda
					\int_{\Omega}
					u^{-\gamma}\varphi\,dx.
					\label{eq3.3}
				\end{align}				
				Let $m=\alpha+p-1$ and choose $\varphi=\eta^pu^{-m}$. It follows that
				\begin{align}
					\nabla\varphi =
					p\eta^{p-1}u^{-m}\nabla\eta
					- m\eta^pu^{-m-1}\nabla u.
					\label{eq3.4}
				\end{align}				
				Substituting \eqref{eq3.4} into \eqref{eq3.3}, we obtain
				\begin{align}
					&
					p
					\int_{\Omega}
					\eta^{p-1}
					u^{-m}
					|\nabla u|^{p-2}
					\nabla u\cdot\nabla\eta\,dx
					-
					m
					\int_{\Omega}
					\eta^pu^{-m-1}
					|\nabla u|^p\,dx
					+
					\mu
					\int_{\Omega}
					V(x)\eta^pu^{p-1-m}\,dx
					\nonumber\\
					&
					\qquad\ge
					\beta
					\int_{\Omega}
					\eta^pu^{p^*-1-m}\,dx
					+
					\lambda
					\int_{\Omega}
					\eta^pu^{-m-\gamma}\,dx.
					\label{eq3.5}
				\end{align}				
				Since the last two integrals are nonnegative, we deduce that
				\begin{align}
					m
					\int_{\Omega}
					\eta^pu^{-m-1}
					|\nabla u|^p\,dx
					\le
					p
					\int_{\Omega}
					\eta^{p-1}
					u^{-m}
					|\nabla u|^{p-1}
					|\nabla\eta|\,dx
					+
					\mu
					\int_{\Omega}
					V(x)\eta^pu^{p-1-m}\,dx.
					\label{eq3.6}
				\end{align}				
				Applying Young's inequality
				\[
				ab
				\le
				\varepsilon a^{\frac{p}{p-1}}
				+
				C(\varepsilon)b^p,
				\]
				with
				\[
				a=
				\eta^{p-1}
				u^{-\frac{(m+1)(p-1)}{p}}
				|\nabla u|^{p-1},
				\qquad
				b=
				pu^{\frac{p-1-m}{p}}
				|\nabla\eta|,
				\]
				we obtain
				\begin{align}
					&
					p
					\eta^{p-1}
					u^{-m}
					|\nabla u|^{p-1}
					|\nabla\eta|
					\le
					\varepsilon
					\eta^p
					u^{-m-1}
					|\nabla u|^p
					+
					C_1(\varepsilon)
					u^{p-1-m}
					|\nabla\eta|^p.
					\label{eq3.7}
				\end{align}	
				Note that $C_1(\varepsilon) = C(\varepsilon) p^p$. Integrating \eqref{eq3.7} over $\Omega$ and substituting into \eqref{eq3.6}, we get
				\begin{align}
					(m-\varepsilon)
					\int_{\Omega}
					\eta^p
					u^{-m-1}
					|\nabla u|^p\,dx
					\le
					C_1(\varepsilon)
					\int_{\Omega}
					u^{p-1-m}
					|\nabla\eta|^p\,dx
					+
					\mu
					\int_{\Omega}
					V(x)\eta^pu^{p-1-m}\,dx.
					\label{eq3.8}
				\end{align}				
				Since $m=\alpha+p-1$, we have $p-1-m=-\alpha$. Using (V1), we get 
				\begin{align}
					\mu
					\int_{\Omega}
					V(x)\eta^pu^{p-1-m}\,dx
					&\le
					\mu\|V\|_\infty
					\int_{\Omega}
					\eta^pu^{-\alpha}\,dx
					\nonumber\\
					&=:C_2
					\int_{\Omega}
					\eta^pu^{-\alpha}\,dx,
					\label{eq3.9}
				\end{align}
				where $C_2=\mu\|V\|_\infty$. Choose $\varepsilon=\frac{m}{2}$, then from \eqref{eq3.8} and \eqref{eq3.9}, we get
				\begin{align}
					\frac{m}{2}
					\int_{\Omega}
					\eta^pu^{-\alpha-p}
					|\nabla u|^p\,dx
					\le
					C_1\left(\frac m2\right)
					\int_{\Omega}
					u^{-\alpha}
					|\nabla\eta|^p\,dx
					+
					C_2
					\int_{\Omega}
					\eta^pu^{-\alpha}\,dx.
					\label{eq3.10}
				\end{align}				
				Finally, dividing by $\frac m2$ and setting
				\[
				C_3=\frac{2}{m}C_1\left(\frac m2\right),
				\qquad
				C_4=\max\left\{
				C_3,\,
				\frac{2C_2}{m}
				\right\},
				\]
				we conclude that
				\begin{align}
					\int_{\Omega}
					\eta^pu^{-\alpha-p}
					|\nabla u|^p\,dx
					\le
					C_4
					\left(
					\int_{\Omega}
					u^{-\alpha}
					|\nabla\eta|^p\,dx
					+
					\int_{\Omega}
					\eta^pu^{-\alpha}\,dx
					\right).
					\label{eq3.11}
				\end{align}
				Let $w=u^{-\frac{\alpha}{p}}$. Then,
				\begin{align}
					|\nabla w|^p =
					\left(\frac{\alpha}{p}\right)^p
					u^{-\alpha-p}
					|\nabla u|^p.
					\label{eq3.13}
				\end{align}			
				Multiplying both sides of \eqref{eq3.13} by $\eta^p$ and integrating over $\Omega$, we obtain
				\begin{align}
					\int_{\Omega}
					\eta^p
					|\nabla w|^p\,dx
					=
					\left(\frac{\alpha}{p}\right)^p
					\int_{\Omega}
					\eta^p
					u^{-\alpha-p}
					|\nabla u|^p\,dx.
					\label{eq3.14}
				\end{align}				
				Using \eqref{eq3.11}, we arrive at
				\begin{align}
					\int_{\Omega}
					\eta^p
					|\nabla w|^p\,dx
					\le
					C_5
					\left(
					\int_{\Omega}
					w^p
					|\nabla\eta|^p\,dx
					+
					\int_{\Omega}
					\eta^pw^p\,dx
					\right),
					\label{eq3.15}
				\end{align}
				where $C_5=\left(\frac{\alpha}{p}\right)^pC_4$.	Next, observe that
				\begin{align}
					|\nabla(\eta w)|^p
					\le
					2^{p-1}
					\eta^p|\nabla w|^p
					+
					2^{p-1}
					w^p|\nabla\eta|^p.
					\label{eq3.16}
				\end{align}				
				Integrating over $\Omega$, it follows that
				\begin{align}
					\int_{\Omega}
					|\nabla(\eta w)|^p\,dx
					\le
					2^{p-1}
					\int_{\Omega}
					\eta^p|\nabla w|^p\,dx
					+
					2^{p-1}
					\int_{\Omega}
					w^p|\nabla\eta|^p\,dx.
					\label{eq3.17}
				\end{align}		
				Substituting \eqref{eq3.15} into \eqref{eq3.17}, we obtain
				\begin{align}
					\int_{\Omega}
					|\nabla(\eta w)|^p\,dx
					\le
					C_6
					\int_{\Omega}
					w^p|\nabla\eta|^p\,dx
					+
					C_7
					\int_{\Omega}
					\eta^pw^p\,dx,
					\label{eq3.18}
				\end{align}
				where $C_6=2^{p-1}(C_5+1)$ and $C_7=2^{p-1}C_5$. Since $\eta w\in W_0^{1,p}(B_R(x_0))$, Sobolev's inequality yields
				\begin{align}
					\left(
					\int_{B_R(x_0)}
					|\eta w|^{p^*}\,dx
					\right)^{\frac{p}{p^*}}
					\le
					S
					\int_{B_R(x_0)}
					|\nabla(\eta w)|^p\,dx,
					\label{eq3.19}
				\end{align}
				where \(S\) denotes the Sobolev constant. Combining \eqref{eq3.18} and \eqref{eq3.19}, we obtain
				\begin{align}
					\left(
					\int_{B_R(x_0)}
					|\eta w|^{p^*}\,dx
					\right)^{\frac{p}{p^*}}
					\le
					C_8
					\int_{B_R(x_0)}
					w^p|\nabla\eta|^p\,dx
					+
					C_9
					\int_{B_R(x_0)}
					\eta^pw^p\,dx.
					\label{eq3.21}
				\end{align}	
			where $C_8=SC_6$ and $C_9=SC_7$. Choose $\eta\in C_0^\infty(B_R(x_0))$ such that $0\le \eta\le1 $, $\eta\equiv1$ on $B_r(x_0)$ and
				\[
				|\nabla\eta|
				\le
				\frac{1}{R-r}.
				\]			
				Since $\eta=1$ on $B_r(x_0)$ it follows from \eqref{eq3.21} that
				\begin{align}
					\left(
					\int_{B_r(x_0)}
					w^{p^*}\,dx
					\right)^{\frac{p}{p^*}}
					&\le
					C_8
					\int_{B_R(x_0)}
					w^p|\nabla\eta|^p\,dx
					+
					C_9
					\int_{B_R(x_0)}
					\eta^pw^p\,dx
					\nonumber\\
					&\le
					\frac{C_8}{(R-r)^p}
					\int_{B_R(x_0)}
					w^p\,dx
					+
					C_9
					\int_{B_R(x_0)}
					w^p\,dx.
					\label{eq3.22}
				\end{align}			
				Hence
				\begin{align}
					\left(
					\int_{B_r(x_0)}
					w^{p^*}\,dx
					\right)^{\frac{p}{p^*}}
					\le
					C_{10}
					\left(
					1+\frac1{(R-r)^p}
					\right)
					\int_{B_R(x_0)}
					w^p\,dx,
					\label{eq3.23}
				\end{align}
				where $C_{10}=\max\{C_8,C_9\}$. Since $w^{p^*}=u^{-\alpha\frac{p^*}{p}}$. Therefore \eqref{eq3.23} becomes
				\begin{align}
					\left(
					\int_{B_r(x_0)}
					u^{-\alpha\frac{p^*}{p}}
					\,dx
					\right)^{\frac{p}{p^*}}
					\le
					C_{10}
					\left(
					1+\frac1{(R-r)^p}
					\right)
					\int_{B_R(x_0)}
					u^{-\alpha}\,dx.
					\label{eq3.24}
				\end{align}			
				Let	$\chi=\frac{p^*}{p}=\frac{N}{N-p}$. Then			
				\begin{align}
					\left(
					\int_{B_r(x_0)}
					u^{-\alpha\chi}
					\,dx
					\right)^{\frac1{\alpha\chi}}
					\le
					C_{11}
					\left(
					1+\frac1{(R-r)^p}
					\right)^{\frac1\alpha}
					\left(
					\int_{B_R(x_0)}
					u^{-\alpha}\,dx
					\right)^{\frac1\alpha},
					\label{eq3.26}
				\end{align}
				where $C_{11}	=C_{10}^{1/\alpha}$. Define
				\[
				r_i
				=
				r+\frac{R-r}{2^i},
				\qquad i=0,1,2,\ldots.
				\]
				Then $r_0=R$ and $r_i\downarrow r$ as $i\to\infty$. Moreover,
				\[
				r_{i-1}-r_i
				=
				\frac{R-r}{2^i},
				\qquad i\ge1.
				\]		
				Next, define
				\[
				\alpha_i
				=
				\alpha\chi^i,
				\qquad i=0,1,2,\ldots,
				\]
				where $\chi=\frac{N}{N-p}>1$. Applying \eqref{eq3.26} with
				\[
				R=r_{i-1},
				\qquad
				r=r_i,
				\qquad
				\alpha=\alpha_{i-1},
				\]
				we obtain
				\begin{align}
					\left(
					\int_{B_{r_i}(x_0)}
					u^{-\alpha_i}\,dx
					\right)^{\frac1{\alpha_i}}
					\le
					C_{11}^{1/\chi^{\,i-1}}
					\left(
					1+
					\frac1{(r_{i-1}-r_i)^p}
					\right)^{\frac1{\alpha_{i-1}}}
					\left(
					\int_{B_{r_{i-1}}(x_0)}
					u^{-\alpha_{i-1}}\,dx
					\right)^{\frac1{\alpha_{i-1}}}.
					\label{eq3.27}
				\end{align}			
				Since $\frac1{(r_{i-1}-r_i)^p}=\frac{2^{ip}}{(R-r)^p}$. Therefore,
				\begin{align}
					\left(
					\int_{B_{r_i}(x_0)}
					u^{-\alpha_i}\,dx
					\right)^{\frac1{\alpha_i}}
					\le
					C_{11}^{1/\chi^{\,i-1}}
					\left(
					1+
					\frac{2^{ip}}{(R-r)^p}
					\right)^{\frac1{\alpha_{i-1}}}
					\left(
					\int_{B_{r_{i-1}}(x_0)}
					u^{-\alpha_{i-1}}\,dx
					\right)^{\frac1{\alpha_{i-1}}}.
					\label{eq3.28}
				\end{align}
				Iterating \eqref{eq3.28}, we obtain
				\begin{align}
					&
					\left(
					\int_{B_{r_i}(x_0)}
					u^{-\alpha_i}\,dx
					\right)^{\frac1{\alpha_i}}
					\le
					\prod_{j=1}^{i}
					C_{11}^{1/\chi^{j-1}}
					\prod_{j=1}^{i}
					\left(
					1+
					\frac{2^{jp}}{(R-r)^p}
					\right)^{1/\alpha_{j-1}}
					\left(
					\int_{B_R(x_0)}
					u^{-\alpha}\,dx
					\right)^{1/\alpha}.
					\label{eq3.29}
				\end{align}			
				Let
				\[
				A_i
				=
				\prod_{j=1}^{i}
				C_{11}^{1/\chi^{j-1}}, \qquad 
				B_i
				=
				\prod_{j=1}^{i}
				\left(
				1+
				\frac{2^{jp}}{(R-r)^p}
				\right)^{1/\alpha_{j-1}}.
				\]			
				Since $\sum_{j=1}^{\infty}\chi^{-(j-1)}=\frac{\chi}{\chi-1}$, it follows that
				\[
				A_i
				\longrightarrow
				A
				=
				C_{11}^{\frac{\chi}{\chi-1}}
				<\infty.
				\]			
			Next, since $\alpha_{j-1}=\alpha\chi^{j-1}$, it follows that
				\[
				\sum_{j=1}^{\infty}
				\frac1{\alpha_{j-1}}
				\ln\left(
				1+\frac{2^{jp}}{(R-r)^p}
				\right)
				<\infty,
				\]
				because $\sum_{j=1}^{\infty}j\chi^{-j}<\infty$. Consequently, $B_i\rightarrow B<\infty$. Passing to the limit in \eqref{eq3.29}, we obtain
				\begin{align}
					\lim_{i\to\infty}
					\left(
					\int_{B_{r_i}(x_0)}
					u^{-\alpha_i}\,dx
					\right)^{1/\alpha_i}
					\le
					AB
					\left(
					\int_{B_R(x_0)}
					u^{-\alpha}\,dx
					\right)^{1/\alpha}.
					\label{eq3.30}
				\end{align}			
				Since $r_i\downarrow r$ and $\alpha_i\rightarrow\infty$, we have
				\[
				\lim_{i\to\infty}
				\left(
				\int_{B_{r_i}(x_0)}
				u^{-\alpha_i}\,dx
				\right)^{1/\alpha_i}
				=
				\|u^{-1}\|_{L^\infty(B_r(x_0))}
				=
				\frac1{\inf_{B_r(x_0)}u}.
				\]			
				Hence, from \eqref{eq3.30},
				\[
				\frac1{\inf_{B_r(x_0)}u}
				\le
				AB
				\left(
				\int_{B_R(x_0)}
				u^{-\alpha}\,dx
				\right)^{1/\alpha}.
				\]			
				Finally, introducing $C_{12}=(AB)^{-1}$, we conclude that
				\begin{equation}
					\inf_{B_r(x_0)}u
					\ge
					C_{12}
					\left(
					\int_{B_R(x_0)}
					u^{-\alpha}\,dx
					\right)^{-1/\alpha}.
					\label{eq3.31}
				\end{equation}
				where $C_{12}=C_{12}(N,p,\alpha,\mu,\|V\|_\infty,R-r)>0$. This completes the proof of Lemma \ref{lem3.1}.
			\end{proof}

The following lemma shows the boundedness of weak solutions. The proof relies on a De Giorgi type iteration technique.

		\begin{lemma}\label{lem3.2}
			Suppose that $u\in W_0^{1,p}(\Omega)$ is a positive weak solution of \eqref{main_prob}. Then $u\in L^\infty(\Omega)$.
		\end{lemma}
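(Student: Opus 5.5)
The plan is a De Giorgi/Moser-type truncation argument. The only delicate term is the critical one, which is handled by a Brezis--Kato style splitting. Write the weak formulation of \eqref{main_prob} for a positive solution $u$:
\[
\int_\Omega |\nabla u|^{p-2}\nabla u\cdot\nabla\varphi\,dx+\mu\int_\Omega V u^{p-1}\varphi\,dx=\beta\int_\Omega u^{p^*-1}\varphi\,dx+\lambda\int_\Omega u^{-\gamma}\varphi\,dx .
\]
The singular term is harmless at large values of $u$. On $\{u\ge 1\}$ we have $u^{-\gamma}\le 1\le u^{p-1}$, and all test functions used below vanish on $\{u<1\}$. The potential term has a good sign by (V1) and can be dropped. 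Hence, on the set where the test functions live, $u$ is a subsolution of $-\Delta_p u\le a(x)u^{p-1}$ with
\[
a(x)=\beta u^{p^*-p}+\lambda .
\]
Since $u\in L^{p^*}(\Omega)$, the coefficient $a$ lies in $L^{N/p}(\Omega)$. This is exactly the borderline integrability.

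\textbf{Step 1 (higher integrability).} For $k\ge 1$ and $L>0$, set $u_L=\min\{u,L\}$. Test with
\[
\varphi=(u-1)^+\,u_L^{p(s-1)},\qquad s\ge 1,
\]
which is admissible because it is bounded in terms of $u$ and lies in $W_0^{1,p}$. The left side controls $c_s\int |\nabla (u\,u_L^{s-1})|^p$ on $\{u\ge1\}$. On the right side, split $a=a\chi_{\{a>K\}}+a\chi_{\{a\le K\}}$. Choose $K$ so large that $\|a\chi_{\{a>K\}}\|_{N/p}$ is small. Then Hölder's and Sobolev's inequalities let the first piece be absorbed into the left side, and the second piece is bounded by $K\int u^{ps}$. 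Letting $L\to\infty$ gives $u\in L^{sp^*}$ whenever $u\in L^{sp}$. Starting from $s=p^*/p$ and iterating finitely many times yields $u\in L^q(\Omega)$ for every $q<\infty$.

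\textbf{Step 2 (boundedness).} Once $u\in L^q$ with $q>N(p^*-p)/p$, the coefficient $a$ lies in $L^{r}$ for some $r>N/p$. I would then run a standard De Giorgi iteration on the level sets $A_k=\{u>k\}$, $k\ge1$, testing with $(u-k)^+$. This gives
\[
\int_{A_k}|\nabla (u-k)^+|^p\,dx\le \int_{A_k} a\,u^{p-1}(u-k)^+\,dx .
\]
Writing $u\le (u-k)^++k$ and applying Hölder with $a\in L^r$, $r>N/p$, produces the recursive inequality
\[
(h-k)^{p}|A_h|^{p/p^*}\le C\,|A_k|^{1+\varepsilon}\quad\text{for } h>k\ge k_0,
\]
with some $\varepsilon>0$. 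Stampacchia's lemma then gives $|A_{k_0+d}|=0$ for a finite $d$. Hence $\|u\|_\infty\le k_0+d$. Alternatively, Moser iteration with exponents $s_j=\chi^j$ works directly, with constants controlled exactly as in the product estimates of Lemma \ref{lem3.1}.

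The main obstacle is Step 1, namely the critical growth. Because $a\in L^{N/p}$ only, the constants cannot be made uniform through the smallness of a norm of $a$ itself. This is why the truncation at height $K$ is needed, relying on the absolute continuity of $\int a^{N/p}$, and why the cutoff $u_L$ must be kept until the absorption has been carried out. Everything else, including the treatment of the singular term via the restriction to $\{u\ge1\}$ and Remark \ref{meas_0}, is routine.
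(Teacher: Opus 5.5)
Your proof is correct up to one fixable detail in Step 2, noted at the end. It takes a genuinely different route from the paper's.

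\textbf{Your route versus the paper's.} You use the classical two-stage scheme. First, a Brezis--Kato step takes the smallness needed for absorption from the tail $a\chi_{\{a>K\}}$ of the coefficient $a=\beta u^{p^*-p}+\lambda\in L^{N/p}$, which gives $u\in L^q$ for every $q<\infty$. Second, a now subcritical Stampacchia iteration gives boundedness.

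The paper skips higher integrability altogether. It tests with $w_k=(u-k)_+$, $k\ge 1$. On $A_k$ it uses the crude bounds $u^{p^*-1}w_k\le u^{p^*}\le 2^{p^*-1}(w_k^{p^*}+k^{p^*})$ and $u^{-\gamma}\le k^{-\gamma}$, and obtains
\[
\|(u-k)_+\|_{p^*}^{p}\le D_7\Bigl(\|(u-k)_+\|_{p^*}^{p^*}+k^{p^*}|A_k|\Bigr).
\]
Take levels $k_n=M(1-2^{-n})$ and set $Y_n=\|(u-k_n)_+\|_{p^*}$. Chebyshev gives $|A_{k_{n+1}}|\le (k_{n+1}-k_n)^{-p^*}Y_n^{p^*}$, so the inequality becomes $Y_{n+1}^{p}\le C\,2^{np^*}Y_n^{p^*}$ with $C$ independent of $M$. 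Since $p^*/p>1$, this is a De Giorgi recursion. The starting value $\|(u-M/2)_+\|_{p^*}$ is made small by taking $M$ large, because $u\in L^{p^*}$.

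So the paper draws its smallness from the tail of $u$ itself, not from the tail of $a$. This works precisely because the critical coefficient is $u^{p^*-p}$, not an arbitrary $L^{N/p}$ function. Hence the obstacle you single out does not actually force a Brezis--Kato step here.

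What each buys: the paper's argument is a single iteration and much shorter. Yours is more modular. Step 1 is a general fact about subsolutions of $-\Delta_p u\le a(x)u^{p-1}$ with $a\in L^{N/p}$, and it yields $u\in\bigcap_{q<\infty}L^q$ as a by-product. The price is more bookkeeping with truncated test functions.

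\textbf{The detail to fix in Step 2.} Writing $u\le (u-k)^++k$ puts a factor $k^{p}$ on the right of your recursion. The inequality is then not of Stampacchia's form with a constant independent of $k$.

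The simplest repair is to set $g:=\beta u^{p^*-1}+\lambda$. By Step 1, $g\in L^t(\Omega)$ for some $t>N/p$. Test $-\Delta_p u\le g$ with $(u-k)^+$ for $k\ge 1$, using $u^{-\gamma}\le 1$ on $A_k$. H\"older, Sobolev and Chebyshev then give
\[
|A_h|\le C\,(h-k)^{-p^*}\,|A_k|^{\frac{p^*}{p-1}\left(1-\frac1t-\frac1{p^*}\right)},\qquad h>k\ge 1 .
\]
The exponent exceeds $1$ exactly when $t>N/p$, so Stampacchia's lemma applies directly.
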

		
		\begin{proof}
			For $k>0$, define
			\[
			A_k:=\{x\in\Omega:\ u(x)>k\},
			\qquad
			w_k:=(u-k)_+.
			\]			
			Since $w_k\in W_0^{1,p}(\Omega)$, taking $w_k$ as a test function in the
			weak formulation associated with \eqref{main_prob}, we obtain			
			\begin{align}
				\int_{A_k} |\nabla w_k|^p\,dx
				+
				\mu\int_{A_k}V(x)u^{p-1}w_k\,dx
				=
				\beta\int_{A_k}u^{p^*-1}w_k\,dx
				+
				\lambda\int_{A_k}u^{-\gamma}w_k\,dx .
				\label{eq1}
			\end{align}			
			Since $V\ge0$, we may drop the nonnegative potential term and get			
			\begin{equation}
				\int_{A_k} |\nabla w_k|^p\,dx
				\le
				\beta\int_{A_k}u^{p^*-1}w_k\,dx
				+
				\lambda\int_{A_k}u^{-\gamma}w_k\,dx .
				\label{eq2}
			\end{equation}			
			Since $w_k\le u$ on $A_k$, we have		
			\[
			u^{p^*-1}w_k
			\le
			u^{p^*},
			\]			
			and therefore			
			\begin{equation}
				\int_{A_k}u^{p^*-1}w_k\,dx
				\le
				\int_{A_k}u^{p^*}\,dx .
				\label{eq3}
			\end{equation}			
			Moreover, $u=w_k+k$ on $A_k$ . Hence, 		
			\begin{align}
				\int_{A_k}u^{p^*}\,dx
				&=
				\int_{A_k}(w_k+k)^{p^*}\,dx
				\nonumber\\
				&\le
				2^{p^*-1}
				\left(
				\int_{A_k}w_k^{p^*}\,dx
				+
				k^{p^*}|A_k|
				\right).
				\label{eq4}
			\end{align}			
			Next, since $u>k$ on $A_k$ and $0<\gamma<1$,			
			\[
			u^{-\gamma}
			\le
			k^{-\gamma}.
			\]			
			Consequently,			
			\begin{equation}
				\int_{A_k}u^{-\gamma}w_k\,dx
				\le
				k^{-\gamma}
				\int_{A_k}w_k\,dx .
				\label{eq5}
			\end{equation}			
			Substituting \eqref{eq4} and \eqref{eq5} into \eqref{eq2}, we obtain			
			\begin{align}
				\int_{A_k} |\nabla w_k|^p\,dx
				\le
				D_1
				\left(
				\int_{A_k}w_k^{p^*}\,dx
				+
				k^{p^*}|A_k|
				\right)
				+
				D_2 k^{-\gamma}
				\int_{A_k}w_k\,dx ,
				\label{eq6}
			\end{align}
			for some constants $D_1,D_2>0$.			
			Applying Hölder's inequality,			
			\[
			\int_{A_k}w_k\,dx
			\le
			|A_k|^{1-\frac1{p^*}}
			\left(
			\int_{A_k}w_k^{p^*}\,dx
			\right)^{\frac1{p^*}}.
			\]			
			Therefore			
			\[
			k^{-\gamma}\int_{A_k}w_k\,dx
			\le
			k^{-\gamma}
			|A_k|^{1-\frac1{p^*}}
			\left(
			\int_{A_k}w_k^{p^*}\,dx
			\right)^{\frac1{p^*}}.
			\]			
			Using Young's inequality, we infer that			
			\[
			k^{-\gamma}\int_{A_k}w_k\,dx
			\le
			D_3
			\int_{A_k}w_k^{p^*}\,dx
			+
			D_4
			k^{-\frac{\gamma p^*}{p^*-1}}
			|A_k|,
			\]			
			for suitable constants $D_3,D_4>0$.			
			Substituting into \eqref{eq6}, we obtain			
			\begin{align}
				\int_{A_k} |\nabla w_k|^p\,dx
				\le
				D_5
				\left(
				\int_{A_k}w_k^{p^*}\,dx
				+
				k^{p^*}|A_k|
				+
				k^{-\frac{\gamma p^*}{p^*-1}}
				|A_k|
				\right).
				\label{eq7}
			\end{align}			
			Since $k\ge1$ implies			
			\[
			k^{-\frac{\gamma p^*}{p^*-1}}
			\le
			1
			\le
			k^{p^*},
			\]			
			the last term can be absorbed into the term
			$k^{p^*}|A_k|$. Hence there exists $D_6>0$ such that			
			\begin{equation}
				\int_{A_k} |\nabla w_k|^p\,dx
				\le
				D_6
				\left(
				\int_{A_k}w_k^{p^*}\,dx
				+
				k^{p^*}|A_k|
				\right).
				\label{eq8}
			\end{equation}			
			By Sobolev's inequality,			
			\[
			S
			\left(
			\int_{A_k}w_k^{p^*}\,dx
			\right)^{\frac{p}{p^*}}
			\le
			\int_{A_k}|\nabla w_k|^p\,dx .
			\]			
			Combining this with \eqref{eq8}, we arrive at			
			\begin{equation}\label{eq9}
				\left(
				\int_{A_k}(u-k)^{p^*}\,dx
				\right)^{\frac{p}{p^*}}
				\le
				D_7
				\left[
				\int_{A_k}(u-k)^{p^*}\,dx
				+
				k^{p^*}|A_k|
				\right]
			\end{equation}
			for some constant $D_7>0$. For $M>2K$, define		
		\[
		k_n:=M\left(1-\frac1{2^n}\right),
		\qquad n\in\mathbb N\cup\{0\}.
		\]		
		Since \eqref{eq9} is precisely of the form required in \cite[Lemma~3.2]{CGL}
		(with $p$ replaced by $p^*$ and $r=\frac{p^*}{p}>1$),
		repeating verbatim the proof of \cite[Lemma~3.2]{CGL}, we infer that		
		\begin{equation}\label{eq10}
			\|(u-k_n)_+\|_{L^{p^*}(A_{k_n})}
			\longrightarrow 0
			\qquad\text{as } n\to\infty,
		\end{equation}		
		provided		
		\begin{equation}\label{eq11}
			\|(u-M/2)_+\|_{L^{p^*}(A_{M/2})}
		\end{equation}
		is sufficiently small.		
		Since $u\in L^{p^*}(\Omega)$, we have		
		\[
		\|(u-M/2)_+\|_{L^{p^*}(A_{M/2})}
		=
		\left(
		\int_\Omega (u-M/2)_+^{p^*}\,dx
		\right)^{1/p^*}
		\longrightarrow 0
		\]		
		as $M\to\infty$. Hence $M$ can be chosen large enough so that
		\eqref{eq11} holds.		
		Keeping such an $M$ fixed and using \eqref{eq10}, we obtain		
		\[
		\int_\Omega (u-M)_+^{p^*}\,dx
		\le
		\int_\Omega (u-k_n)_+^{p^*}\,dx
		=
		\int_{A_{k_n}} (u-k_n)_+^{p^*}\,dx
		\longrightarrow 0 .
		\]		
		Therefore		
		\[
		\int_\Omega (u-M)_+^{p^*}\,dx=0,
		\]		
		which implies		
		\[
		(u-M)_+=0
		\qquad\text{a.e. in }\Omega.
		\]		
		Hence		
		\[
		u\le M
		\qquad\text{a.e. in }\Omega.
		\]		
		Consequently,		
		\[
		u\in L^\infty(\Omega).
		\]		
		This completes the proof.
	\end{proof}

		\begin{lemma}\label{rhsbounded}
			Let $u$ be a positive weak solution of \eqref{main_prob}. Then
			\[
			f(x):=
			\beta u^{p^*-1}
			+\lambda u^{-\gamma}
			-\mu V(x)u^{p-1}
			\in L^\infty_{\rm loc}(\Omega).
			\]
		\end{lemma}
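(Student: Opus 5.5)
The plan is to bound each of the three terms of $f$ separately on an arbitrary compact set $K\Subset\Omega$, using Lemma \ref{lem3.2} for the two terms that are positive powers of $u$ and Lemma \ref{lem3.1} for the singular term. Since every compact $K$ can be covered by finitely many balls $B_r(x_0)$ with $B_{2r}(x_0)\Subset\Omega$, it suffices to show that $f\in L^\infty(B_r(x_0))$ for each such ball.

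\emph{Terms with positive powers of $u$.} By Lemma \ref{lem3.2}, $u\in L^\infty(\Omega)$; write $M:=\|u\|_\infty$. Since $p^*-1>0$ and $p-1>0$, we get $0\le \beta u^{p^*-1}\le \beta M^{p^*-1}$ a.e. By (V1), $0\le \mu V(x)u^{p-1}\le \mu\|V\|_\infty M^{p-1}$ a.e. These two bounds are global on $\Omega$.

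\emph{Singular term.} This is the main obstacle: we need a positive lower bound for $u$ on $B_r(x_0)$, and $0<\lambda u^{-\gamma}\le \lambda(\inf_{B_r(x_0)}u)^{-\gamma}$ then finishes the proof. A positive weak solution is in particular a positive supersolution of \eqref{main_prob}, so Lemma \ref{lem3.1} applies with $R=2r$ and any fixed $\alpha>0$. It gives
\[
\inf_{B_r(x_0)}u\ \ge\ C_{12}\left(\int_{B_{2r}(x_0)}u^{-\alpha}\,dx\right)^{-1/\alpha}.
\]
It remains to show that $\int_{B_{2r}(x_0)}u^{-\alpha}\,dx<\infty$ for some small $\alpha>0$. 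Note that Remark \ref{meas_0} only gives $u>0$ a.e., which alone does not make this integral finite.

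I would argue the finiteness of $\int_{B_{2r}(x_0)}u^{-\alpha}\,dx$ by a comparison argument. Since $V\ge0$ and the right-hand side of \eqref{main_prob} is positive, $u$ is a weak supersolution of $-\Delta_p u+\mu\|V\|_\infty u^{p-1}\ge 0$ and is not identically zero. By the weak Harnack inequality for nonnegative supersolutions of such equations (Trudinger's estimate for quasilinear operators with bounded zero-order coefficient), $\inf_{B_r}u\ge c\big(\fint_{B_{2r}}u^{s}\,dx\big)^{1/s}$ for small $s>0$, and the right-hand side is positive because $u>0$ a.e. Alternatively, one could compare $u$ on $B_{2r}(x_0)$ with the solution $\underline{u}$ of $-\Delta_p\underline{u}+\mu\|V\|_\infty\underline{u}^{p-1}=\lambda M^{-\gamma}$, $\underline{u}=0$ on the boundary. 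This is legitimate because $u^{-\gamma}\ge M^{-\gamma}$. Weak comparison then gives $u\ge\underline{u}$, and $\underline{u}$ is positive on compact subsets by the strong maximum principle of V\'azquez. Either route makes $u^{-\alpha}$ bounded, hence integrable, near $B_r(x_0)$. This in turn makes the estimate of Lemma \ref{lem3.1} quantitative, with $\inf_{B_r(x_0)}u\ge c_0>0$.

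Combining the three bounds gives $|f|\le \beta M^{p^*-1}+\lambda c_0^{-\gamma}+\mu\|V\|_\infty M^{p-1}$ a.e.\ on $B_r(x_0)$. Covering $K$ by finitely many such balls then yields $f\in L^\infty(K)$, that is, $f\in L^\infty_{\rm loc}(\Omega)$.
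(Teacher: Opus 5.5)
Your proposal is correct. Its skeleton matches the paper's: you bound $\beta u^{p^*-1}$ and $\mu V u^{p-1}$ globally via Lemma \ref{lem3.2} and (V1), and you control $\lambda u^{-\gamma}$ through a local positive lower bound for $u$.

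The difference lies in the step that actually matters. The paper applies Lemma \ref{lem3.1} on $B_{2R}(x_0)$ and then asserts a constant $m_R>0$ with $u\ge m_R$ on $B_R(x_0)$. It never checks that $\int_{B_{2R}(x_0)}u^{-\alpha}\,dx<\infty$. Lemma \ref{lem3.1} is only the negative-exponent half of Moser's weak Harnack inequality, with no crossover (John--Nirenberg) step. Its right-hand side is $0$ whenever that negative moment diverges, and $u>0$ a.e. does not rule this out. You correctly flag this and supply the missing input in one of two ways:
\begin{itemize}
\item Trudinger's full weak Harnack inequality for $-\Delta_p u+\mu\|V\|_\infty u^{p-1}\ge 0$, which holds because $V\le\|V\|_\infty$ and $u\ge 0$.
\item Comparison on a ball with the solution $\underline u$ of $-\Delta_p\underline u+\mu\|V\|_\infty\underline u^{p-1}=\lambda M^{-\gamma}$. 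This uses $u^{-\gamma}\ge M^{-\gamma}$, weak comparison for this strictly monotone operator, and V\'azquez's strong maximum principle, whose integral condition holds for the zero-order term $s^{p-1}$.
\end{itemize}
Both routes are sound.

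Either route already gives $\inf_{B_r(x_0)}u\ge c_0>0$ directly, so going back to Lemma \ref{lem3.1} afterwards is superfluous. As written there is also a small radius mismatch. You obtain boundedness of $u^{-\alpha}$ near $B_r(x_0)$, but Lemma \ref{lem3.1} with $R=2r$ needs integrability on all of $B_{2r}(x_0)$. Fix this either by running the argument on a larger ball such as $B_{4r}(x_0)\Subset\Omega$, or by dropping Lemma \ref{lem3.1} altogether.

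As for what each approach buys: the paper's version is self-contained through its own Moser iteration, but it is incomplete without an integrability input it never provides. Yours relies on external results (Trudinger, or comparison plus V\'azquez) but actually proves the lower bound. The comparison route is the standard and most elementary way to get $u\ge c_K>0$ on compacts for singular problems, since it exploits the positivity of the singular term itself.
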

		
		\begin{proof}
			By Lemma \ref{lem3.2}, $u\in L^\infty(\Omega)$. Hence there exists a constant $M>0$ such that
			\[
			0<u(x)\le M
			\qquad \text{for a.e. }x\in\Omega.
			\]			
			Let $B_{2R}(x_0)\Subset\Omega$.	Since $u$ is a positive supersolution, Lemma \ref{lem3.1}
			implies that
			\[
			\inf_{B_R(x_0)}u
			\ge
			C_{12}
			\left(
			\int_{B_{2R}(x_0)}
			u^{-\alpha}\,dx
			\right)^{-1/\alpha}.
			\]			
			Therefore there exists a constant
			$m_R>0$ such that
			\[
			u(x)\ge m_R
			\qquad
			\text{for all }
			x\in B_R(x_0).
			\]			
			Consequently,
			\[
			u^{-\gamma}(x)
			\le
			m_R^{-\gamma}
			\qquad
			\text{for all }
			x\in B_R(x_0),
			\]
			which yields
			\[
			u^{-\gamma}
			\in
			L^\infty(B_R(x_0)).
			\]			
			Since $B_R(x_0)\Subset\Omega$ is arbitrary,
			\begin{align}\label{sing_bound}
				u^{-\gamma}
				\in
				L^\infty_{\mathrm{loc}}(\Omega).
			\end{align}			
			Next, since $u\in L^\infty(\Omega)$, we obtain
		\begin{align}\label{cri_bound}
			u^{p^*-1}
			\in
			L^\infty(\Omega).
		\end{align}
 By Lemma \ref{lem3.2} and assumption (V1), we get 
        \begin{align}\label{V_bound}
        	V(x)u^{p-1}
        	\in
        	L^\infty(\Omega).
        \end{align}			
			Combining \eqref{sing_bound}, \eqref{cri_bound} and \eqref{V_bound},
			we conclude that
			\[
			\beta u^{p^*-1}
			+\lambda u^{-\gamma}
			-\mu V(x)u^{p-1}
			\in
			L^\infty_{\mathrm{loc}}(\Omega).
			\]			
			Hence
			\[
			f\in L^\infty_{\mathrm{loc}}(\Omega).
			\]
		\end{proof}
	
	\begin{theorem}\label{holderregularity}
		Let $u$ be a positive weak solution of \eqref{main_prob}. Then there exists
		$\alpha\in(0,1)$ such that
		\[
		u\in C^{1,\alpha}_{\mathrm{loc}}(\Omega).
		\]
	\end{theorem}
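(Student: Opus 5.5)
The argument is local. We treat the equation on compact subsets of $\Omega$ as a $p$-Laplace equation with a bounded right-hand side, and then quote the classical interior regularity theory. Fix an arbitrary ball $B_{2R}(x_0)\Subset\Omega$; it suffices to show that $u\in C^{1,\alpha}(\overline{B_{R/2}(x_0)})$ with an exponent $\alpha$ that can be chosen independently of the ball.

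First, we rewrite the weak formulation of \eqref{main_prob}. Take test functions $\varphi\in C_0^\infty(B_{2R}(x_0))$. Since $u\ge m_{2R}>0$ on the support of $\varphi$ (by Lemma \ref{lem3.1}, as used in Lemma \ref{rhsbounded}), the singular term $u^{-\gamma}\varphi$ is integrable. The equation then reads
\[
\int_{B_{2R}(x_0)}|\nabla u|^{p-2}\nabla u\cdot\nabla\varphi\,dx=\int_{B_{2R}(x_0)} f\varphi\,dx ,
\]
where $f=\beta u^{p^*-1}+\lambda u^{-\gamma}-\mu V u^{p-1}$. By Lemma \ref{rhsbounded}, $f\in L^\infty(B_{2R}(x_0))$, with a norm bounded in terms of $\|u\|_\infty$ (from Lemma \ref{lem3.2}), $\|V\|_\infty$ and $m_{2R}$. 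Because $u\in W^{1,p}_0(\Omega)$, we also have $u\in W^{1,p}(B_{2R}(x_0))\cap L^\infty(B_{2R}(x_0))$. Hence $u$ is a bounded local weak solution of $-\Delta_p u=f$ with $f\in L^\infty$.

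Second, we invoke Tolksdorf's interior estimate \cite{Tolksdorf1984}, or equivalently Lieberman's general structure result \cite{Lieberman1988}, for operators $\operatorname{div}A(x,u,\nabla u)+B(x,u,\nabla u)=0$. Here we take $A(\xi)=|\xi|^{p-2}\xi$, which satisfies the standard ellipticity and growth conditions with $\kappa=0$, and $B=f(x)$, which satisfies $|B|\le \|f\|_{L^\infty(B_{2R})}$. This gives $\alpha=\alpha(N,p)\in(0,1)$ and a constant $C$ depending on $N$, $p$, $\|u\|_\infty$, $\|f\|_{L^\infty(B_{2R})}$ and $R$ such that $\|u\|_{C^{1,\alpha}(B_{R/2}(x_0))}\le C$. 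In Tolksdorf's theorem the Hölder exponent depends only on the structure constants, that is on $N$ and $p$, and not on the ball. Covering any compact $K\Subset\Omega$ by finitely many such balls then yields $u\in C^{1,\alpha}(K)$, hence $u\in C^{1,\alpha}_{\rm loc}(\Omega)$.

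The main obstacle is the singular term. One has to check that the local lower bound $u\ge m_{2R}>0$ from Lemma \ref{lem3.1} really applies. That lemma requires $\int_{B_{2R}}u^{-\alpha}\,dx<\infty$ for some $\alpha>0$, and this is not automatic from $u>0$ a.e. (Remark \ref{meas_0}). I would first try to obtain this integrability from the supersolution property itself, via the standard logarithmic estimate: testing with $\eta^p u^{1-p}$ (after truncation from below by $u+\varepsilon$) gives $\log u\in BMO_{\rm loc}$. The John–Nirenberg inequality then provides a small $\alpha>0$ with $\int u^{-\alpha}<\infty$. If this step went through only for the truncations $u+\varepsilon$, I would let $\varepsilon\to0$ using the uniform bounds. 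Once $m_{2R}>0$ is in hand, the remaining steps are direct applications of the cited regularity theory.
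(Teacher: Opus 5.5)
Your proof is correct and follows the paper's route. The paper likewise reads the equation locally as $-\Delta_p u=f$ with $f\in L^\infty_{\rm loc}(\Omega)$ by Lemma \ref{rhsbounded}, then quotes Tolksdorf and Lieberman. You also rightly note that Lemma \ref{lem3.1} gives $m_R>0$ only once $\int_{B_{2R}(x_0)}u^{-\alpha}\,dx<\infty$ is known, which the paper's proof of Lemma \ref{rhsbounded} skips. Your logarithmic estimate with John--Nirenberg, run on $u+\varepsilon$, fills this gap, and the same shift also makes the test function $\eta^p u^{-m}$ in Lemma \ref{lem3.1} admissible.
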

	
	\begin{proof}
		By Lemma \ref{rhsbounded},
		\[
		f(x):=\beta u^{p^*-1}+\lambda u^{-\gamma}-\mu V(x)u^{p-1}
		\in L^\infty_{\mathrm{loc}}(\Omega).
		\]
		Hence $u$ satisfies
		\[
		-\Delta_p u=f(x)
		\qquad \text{in } \Omega,
		\]
		with $f\in L^\infty_{\mathrm{loc}}(\Omega)$. Therefore, by the local regularity theorem of Tolksdorf \cite{Tolksdorf1984} (see also Lieberman \cite{Lieberman1988,Lieberman1991}), there exists $\alpha\in(0,1)$ such that
		\[
		u\in C^{1,\alpha}_{\mathrm{loc}}(\Omega).
		\]
	\end{proof}

	\section*{Competing Interest}
	There are no competing interests.
	
	\section*{Acknowledgement}	
	The author thanks Dr. Aditi Chakrabarty of the Karlsruhe Institute of Technology, Germany.
	
	\end{document}